\def\({\left(}
\def \){ \right)}
\def\[{\left[}
\def \]{ \right]}
 \def\N{{\mathcal N}}
\theoremstyle{plain} 
\newtheorem{theorem}{Theorem}[section]
\newtheorem{lemma}[theorem]{Lemma}
\newtheorem{proposition}[theorem]{Proposition}
\theoremstyle{definition}
\newtheorem{definition}[equation]{Definition}
\theoremstyle{remark}
\newtheorem{remark}[equation]{Remark}
\numberwithin{equation}{section}
\begin{document}

\title[Hyperuniform  point sets on flat tori: deterministic and probabilistic ]
{Hyperuniform  point sets on flat tori: deterministic and probabilistic aspects}

\author{Tetiana A. Stepanyuk}
\address{Graz University of Technology, Institute of Analysis and Number Theory, Kopernikusgasse 24/II 8010, Graz, Austria}
\address{Present address: Johann Radon Institute for Computational and Applied Mathematics (RICAM)
Austrian Academy of Sciences, Altenbergerstrasse 69 4040, Linz, Austria;
   Institute of Mathematics of NAS of Ukraine, 3, Tereshchenkivska st., 01601, Kyiv-4, Ukraine}

\email{tania$_{-}$stepaniuk@ukr.net}

\thanks{The author is supported by the Austrian Science Fund FWF projects F5503 and F5506-N26 (part of the Special Research Program (SFB) ``Quasi-Monte Carlo Methods: Theory and Applications'') }

\maketitle

\begin{abstract}
In this paper 
we study hyperuniformity on flat tori. Hyperuniform point sets on the unit sphere have been studied by J.~Brauchart, P.~Grabner, W.~Kusner and J.~Ziefle in \cite{BrauchartGrabnerKusner2018} and \cite{BrauchartGrabnerKusnerZiefle2018}. It is shown that point sets which are hyperuniform for large balls, small balls or balls of threshold order on the flat tori are uniformly distributed. Moreover, it  is also shown that QMC--designs sequences for Sobolev classes,  probabilistic point sets (with respect to jittered samplings) and some determinantal point process are hyperuniform.
\end{abstract}

\keywords{{\bf Keywords:} Hyperuniformity, flat tori, uniform distribution, QMC design, jittered sampling, determinantal process}

{\bf Mathematics Subject Classification:} 33C10, 65D30, 11K38.

\section{Introduction}\label{intro}

The concept of hyperuniformity had been introduced by S. Torquato and F. Stillinger \cite{TorquatoStillinger} to measure regularity of distributions of infinite particle systems in $\mathbb{R}^{d}$.
A hyperuniform many--particle system  \cite{Torquato2018} in $d$--dimensional Euclidean space $\mathbb{R}^{d}$ is one in which normalized density fluctuations are completely suppressed at very large length scales, implying that the structure factor $S(\mathbf{k})$ tends to zero in the limit $|\mathbf{k}|\rightarrow0$. Equivalently, a hyperuniform system is one in which the number variance
of particles within a spherical observation window of radius $R$ grows more slowly than the window volume in the large $R$--limit, i.e., slower than $R^{d}$.

Hyperuniformity found a number of different applications in physics and beyond physics (see e.g., \cite{JiaoLauHatzikirouMeyer-HermannCorboTorquato} 
\cite{OguzSocolarSteinhardtTorquato} 
 \cite{Torquato2018}).
For example, it was observed, that all perfect crystals, perfect quasicrystals and special disordered systems are hyperuniform.
So, the hyperuniformity concept enables a unified framework to classify and structurally characterize crystals, quasicrystals and the exotic disordered varieties.

In  \cite{BrauchartGrabnerKusner2018} and \cite{BrauchartGrabnerKusnerZiefle2018} three regimes of hyperuniformity for sequences of point sets and for samples of points processes  on the unit sphere were introduced and studied. 
The aim of present paper is to study hyperuniformity of deterministic and probabilistic point sets on flat tori  in three regimes of hyperuniformity.

 Let $\Lambda=A\mathbb{Z}^{d}$ be a lattice in $\mathbb{R}^{d}$, generated by some nonsingular square matrix $A=[v_{1},...,v_{d}]$. Then we identify by
  \begin{align*}
 \Omega_{\Lambda}:=\Big\{w: \ w=\sum\limits_{j=1}^{d}\alpha_{j}v_{j}, \ \ \alpha_{j}\in[0,1), \ j=1,2,...,d \Big\}
 \end{align*}
 the fundamental domain of the quotient space $\mathbb{R}^{d}/ \Lambda$.  The volume of $\Omega_{\Lambda}$, denoted by $|\Lambda|$  equals  $|\mathrm{det} A|$ , and is called the co--volume of $\Lambda$.
 And let $\Lambda^{*}$ be the dual lattice to $\Lambda$, that is, 
 $\Lambda^{*}=\left\{ w\in \mathbb{R}^{d}: \ \langle w,v\rangle \in \mathbb{Z}  \ \mathrm{for \ all} \ v\in\Lambda \right\}=(A^{t})^{-1}\mathbb{Z}^{d}$ .
 
  Let $\Delta$ be the Laplace--Beltrami operator on $\mathbb{R}^{d}/ \Lambda$, which has the sequence of eigenvalues $\left(-4\pi^{2}\langle w,w \rangle\right)_{w\in\Lambda^{*}}$ and a  complete orthornormal system of  eigenfunctions $f_{w}(u)=e^{2\pi\langle u, w\rangle}$, $w\in\Lambda^{*}$, such that
   \begin{equation}
\Delta f_{w}+4\pi^{2}\langle w,w \rangle f_{w}=0
\end{equation}
and
   \begin{equation}
\int\limits_{\Omega_{\Lambda}}f_{w}(u)\overline{f_{w'}(u)}d\mu(u)=\delta_{w,w'}, \ \ w,w'\in \Lambda^{*},
\end{equation}
where $\mu$ is the normalized Lebesgue measure in  $\Omega_{\Lambda}$.
 
 Let $B(\mathbf{x}, R)$ denotes an Euclidean ball of radius $R$ and with center $\mathbf{x}$, $\mathbf{x}\in\mathbb{R}^{d}$.
 The $d$-dimensional volume of the ball $B(\mathbf{x}, R)$  equals
 \begin{equation}\label{volumeBall}
 \mathrm{Vol}(B(\mathbf{x}, R))=\frac{\pi^{\frac{d}{2}}}{\Gamma(\frac{d}{2}+1)}R^{d}.
\end{equation}

The paper is organized  as follows.

 Section \ref{prelim} contains  the necessary background for Bessel functions, definition of hyperuniformity for all three regimes and a computable expression for the number variance.

In Section \ref{Hyperuniform} we prove that sequences of  point sets which are hyperuniform for large balls, small balls or balls of threshold order are uniformly distributed.

Section \ref{QMCHyperuniform} gives the definition of QMC design sequences for Sobolev classes $W^{\alpha,2}(\Omega_{\Lambda})$. Here we prove that  QMC design sequences are hyperuniform in all three regimes.

In Section \ref{JitteredSamplings} we consider the jittered sampling process on flat tori and show that it is hyperuniform in all three regimes.

In Section \ref{determinantalProcess} we study hyperuniformity of random points on flat tori drawn from certain translation invariant determinantal processes. The expected Riesz energy of these determinantal processes on flat tori was computed in \cite{MarzoOrtegsCerda}.
Riesz energy on flat tori was also studied in \cite{HardinSaffSimanekSu2018} 
and
\cite{HardinSaffSimanek}. This process turns out to be hyperuniform for large and small balls and has a slightly weaker behavior in the threshold order regime.

\section{Preliminaries}\label{prelim}

\subsection{Bessel functions}\label{Bessel}
Bessel functions are solutions of  Bessel's differential equation
\begin{align*}
z^{2}\frac{d^{2}x}{dz^{2}}+z\frac{dx}{dz}+(z^{2}-u^{2})x
=0,
\end{align*}
where $u$ and $z$ can be arbitrarily complex.

For small arguments $0<z\ll \sqrt{v+1}$, $v>0$ the following asymptotic formula holds (see, e.g.,  \cite{Abramowitz Stegun}  (9.1.7) and  (9.1.10))
\begin{align}\label{asympSmall}
J_{v}(z)\sim\frac{1}{\Gamma(v+1)}\Big(\frac{z}{2}\Big)^{v}.
\end{align}

For large arguments $z$ we will use the formula (see, e.g.,  \cite{Abramowitz Stegun}  (9.2.1))
\begin{align}\label{asympLarge}
J_{v}(z)=\sqrt{\frac{2}{\pi z}}\Big(\cos\Big( z-\frac{v\pi}{2}-\frac{\pi}{4}\Big)+\mathcal{O}\Big(\frac{1}{|z|} \Big)\Big), \  -\pi<\mathrm{arg} z<\pi .
\end{align}

For Bessel function the following integral representation (see, e.g. \cite{Magnus-Oberhettinger-Soni1966:formulas_theorems} (3.6.2)) holds
\begin{equation}\label{formulaBessel1}
\Gamma\Big(\frac{1}{2}+v\Big)J_{v}(z)=2\pi^{-\frac{1}{2}}\Big( \frac{1}{2}z\Big)^{v}
\int\limits_{0}^{\frac{\pi}{2}}\cos(z\cos t)\sin^{2v}t dt,  \ \ \mathrm{Re}>-\frac{1}{2}.
\end{equation}
To calculate the integral involving Bessel function we will  use the formula (see, e.g. \cite{Magnus-Oberhettinger-Soni1966:formulas_theorems} (3.8.1))
\begin{align}\label{formulaBessel2}
\int z^{v+1}J_{v}(z)dz=z^{v+1}J_{v+1}(z).
\end{align}

\subsection{Hyperuniformity on the flat tori}\label{hyperuniform} 

As in \cite{BrauchartGrabnerKusner2018} we will consider a sequence of finite point sets  $(X_{N})_{N\in A}$, $A\subseteq\mathbb{N}$, assuming that $\# X_{N}=N$. Also, the set $X_{N}=\{\mathbf{x}_{1}^{(N)}, ... \mathbf{x}_{N}^{(N)}\}$ consist of points depending on $N$, but we will omit this dependence for the ease of notation.
 
 \begin{definition}\label{def1}{(Uniform distribution)}
  A sequence of point sets $(X_{N})_{N\in A}$ is called uniformly distributed on 
  $\Omega_{\Lambda}$, if for all balls $B(\mathbf{x}, R)$, $\mathbf{x}\in\mathbb{R}^{d}$, $R\in[0,\frac{1}{2}\mathrm{diam}\Omega_{\Lambda}]$ 
  the relation
\begin{equation}\label{defUniformDistr}
 \lim\limits_{N\rightarrow\infty}\frac{1}{N}\sum\limits_{j=1}^{N}\mathbbm{1}_{B(\mathbf{x}, R)}(\mathbf{x}_{j})=\mathrm{Vol}(B(\mathbf{x}, R))
\end{equation}
holds.
\end{definition}

It follows from the Weyl criterion (see, e.g., the book about the general theory of uniform distributions \cite{KuipersandNiederreiter}),
that \eqref{defUniformDistr} is equivalent to
\begin{align}\label{defUniformDistr1}
 \lim\limits_{N\rightarrow\infty}\frac{1}{N^{2}}\sum\limits_{k,j=1}^{N}
 e^{2\pi i \langle w,\mathbf{x}_{j}-\mathbf{x}_{k} \rangle}=0 \ \ \ \mathrm{for \ all} \ w\in\Lambda^{*}\setminus\{0\}.
\end{align}

We will use the definition of hyperuniformity in terms of number variance.

\begin{definition}\label{def1}{(Number variance)}
  Let $(X_{N})_{N\in \mathbb{N}}$ be a sequence of point sets  on the flat tori. The  number variance of the sequence for balls of opening radius $R$ is given by
  \begin{align}\label{VarianceDef}
&V(X_{N},R):=
\mathbb{V}_{x}\#(X_{N}\cap B(\mathbf{x},R)) \notag \\
&=
\int\limits_{\Omega_{\Lambda}}
 \left(\sum\limits_{j=1}^{N}\mathbbm{1}_{B(\mathbf{x}, R)}(\mathbf{x}_{j})-N\mathrm{Vol}\big(B(\mathbf{x}, R)\big)\right)^{2}d\mu(\mathbf{x}).
\end{align}
\end{definition}

A classic measure of uniform distribution is given by the $L^{2}$-discrepancy
\begin{equation}\label{discrepL2}
 D_{N}^{2}(X_{N})
 =\left(\int\limits_{0}^{\frac{1}{2}\mathrm{diam}\Omega_{\Lambda}} V(X_{N},R) \, dR \right)^{\frac{1}{2}}.
\end{equation}

\begin{definition}\label{def1}{(Hyperuniformity).}
 Let $(X_{N})_{N\in \mathbb{N}}$ be a sequence of point sets in the  $\Omega_{\Lambda}$. A sequence is called
 \begin{itemize}
 \item {\bf hyperuniform for large balls}, if
 \begin{align}\label{hyperuniformLargeBalls}
 V(X_{N},R)=o(N) \ \ \mathrm{as} \ N\rightarrow\infty
\end{align}
for all $R\in(0,\frac{1}{2}\mathrm{diam}\Omega_{\Lambda})$;

 \item {\bf hyperuniform for small balls}, if
 \begin{align}\label{hyperuniformSmallBalls}
 V(X_{N},R)=o(N\mathrm{Vol}(B(\mathbf{x}, R_{N}))) \ \ \mathrm{as} \ N\rightarrow\infty
\end{align}
and all sequences $(R_{N})_{N\in\mathbb{N}}$ such that
\begin{itemize}
\item[•] (1) $\lim\limits_{N\rightarrow\infty}R_{N}=0$
\item[•] (2) $\lim\limits_{N\rightarrow\infty}N\mathrm{Vol}(B(\mathbf{x}, R_{N}))=\infty$, which is equivalent to ${R_{N}N^{\frac{1}{d}}\rightarrow\infty}$;
\end{itemize}

\item {\bf hyperuniform for balls of threshold order}, if
\begin{align}\label{hyperuniformTresholdOrder}
\limsup\limits_{N\rightarrow\infty} V(X_{N},tN^{-\frac{1}{d}})=\mathcal{O}(t^{d-1}) \ \ \mathrm{as} \ t\rightarrow\infty .
\end{align}
 \end{itemize}
\end{definition}

The hyperuniformity on the unit sphere $\mathbb{S}^{d}$ was studied in \cite{BrauchartGrabnerKusner2018} and \cite{BrauchartGrabnerKusnerZiefle2018}.

The Fourier series for the indicator function
 ${g_{\mathbf{y}}(\mathbf{x}):=\mathbbm{1}_{B(\mathbf{y}, R)}(\mathbf{x})}$ of the Euclidean ball  $B(\mathbf{x}, R)$  on the lattice $\Lambda^{*}$ has the form
\begin{align}\label{FourierSeriesIndicatorFunction}
\mathbbm{1}_{B(\mathbf{x}_{j}, R)}(\mathbf{x})=\widehat{g}_{\mathbf{x}_{j}}(0)+\sum\limits_{w\in\Lambda^{*}\setminus\{0\}}\widehat{g}_{\mathbf{x}_{j}}(v)e^{2\pi i\langle w, \mathbf{x} \rangle},
\end{align}
with Fourier coefficients
\begin{align}\label{FourierCoefficientsIndicatorFunction}
\widehat{g}_{\mathbf{x}_{j}}(w)&=\int\limits_{\Omega_{\Lambda}}\mathbbm{1}_{B(\mathbf{x}_{j}, R)}(\mathbf{x})e^{-2\pi i\langle w, \mathbf{x} \rangle}d\mu(\mathbf{x}) 
= \int\limits_{|\mathbf{x}-\mathbf{x}_{j}|\leq R} e^{-2\pi i\langle w, \mathbf{x} \rangle}d\mu(\mathbf{x}) 
= e^{-2\pi i\langle w, \mathbf{x}_{j} \rangle}a_{w}(R),
\end{align}
where
\begin{align}\label{avR}
a_{w}(R):=
\int\limits_{|\mathbf{x}|\leq R} e^{-2\pi i\langle w, \mathbf{x} \rangle}d\mu(\mathbf{x}).
\end{align}

Then, the variance $V(X_{N},R)$ can be written as
\begin{align}
 &V(X_{N},R)=\int\limits_{\Omega_{\Lambda}}\left(
 \sum\limits_{j=1}^{N}\mathbbm{1}_{B(\mathbf{x}, R)}(\mathbf{x}_{j})-N\mathrm{Vol}(B(\mathbf{x}, R))\right)^{2}d\mu(\mathbf{x}) \notag  \\
 &=\sum\limits_{w\in\Lambda^{*}\setminus\{0\}}a_{w}(R)a_{-w}(R)\sum\limits_{k,j=1}^{N}e^{-2\pi i\langle w,\mathbf{x}_{k}-\mathbf{x}_{j} \rangle}  =
 \sum\limits_{w\in\Lambda^{*}\setminus\{0\}}a_{w}(R)a_{-w}(R)\Big|\sum\limits_{j=1}^{N}e^{-2\pi i\langle w,\mathbf{x}_{j} \rangle}\Big|^{2}.  \label{Var11}
\end{align}

The coefficients $a_{w}(R)$ can be computed by integrating in spherical coordinates. Using  the
spherical coordinate system with a radial coordinate $r$ and angular coordinates $\varphi_{1},...,\varphi_{d-1}$,  where the domain of each $\varphi_{j}$, except $\varphi_{d-1}$, is $[0,\pi)$, and the domain of $\varphi_{d-1}$  is $[0, 2\pi)$, we obtain
\begin{equation}\label{avR_sph}
a_{w}(R)=
\int\limits_{0}^{R} \int\limits_{0}^{\pi}...\int\limits_{0}^{2\pi}
e^{-2\pi i  r|w| \cos\varphi_{1}} r^{d-1} \left(\sin \varphi_{1}\right)^{d-2}...
\sin \varphi_{d-2} d\varphi_{1}...d\varphi_{d-1}dr.
\end{equation}


Each of last $d-2$ integrals is a particular value of the  beta--function, which can be rewritten in terms of gamma functions
\begin{align}\label{cd}
&
 \int\limits_{0}^{\pi}...\int\limits_{0}^{\pi}\int\limits_{0}^{2\pi}
 \left(\sin \varphi_{1}\right)^{d-3}... \,
(\sin \varphi_{d-3}) \, d\varphi_{1}... \, d\varphi_{d-2} \notag \\
 &=
 2\pi B\Big(\frac{d-2}{2},\frac{1}{2}\Big) B\Big(\frac{d-3}{2},\frac{1}{2}\Big)... 
 B\Big(1,\frac{1}{2}\Big) 
 =
 \frac{2\pi^{\frac{d-1}{2}}}{\Gamma(\frac{d-1}{2})}.
\end{align}

Thus, from \eqref{avR_sph} and \eqref{cd} it follows, that 
\begin{equation}\label{avR_calc}
a_{w}(R)
= \frac{2\pi^{\frac{d-1}{2}}}{\Gamma(\frac{d-1}{2})}
\int\limits_{0}^{R}\int\limits_{0}^{\pi}e^{-2\pi i r |w|\cos\varphi}  r^{d-1}(\sin \varphi)^{d-2}d\varphi dr.
\end{equation}

Applying relations \eqref{formulaBessel1} and \eqref{formulaBessel2},
we have that
\begin{align}\label{avRCoef}
&\int\limits_{0}^{R}r^{d-1}\int\limits_{0}^{\pi}e^{-2\pi i r |w|\cos\varphi}(\sin \varphi)^{d-2}d\varphi \, dr \notag \\
 = &\frac{\pi^{\frac{1}{2}}\Gamma(\frac{d-1}{2})}{(\pi|w|)^{\frac{d}{2}-1}}\int\limits_{0}^{R}r^{\frac{d}{2}}J_{\frac{d}{2}-1}(2\pi r|w|)dr = \frac{\Gamma(\frac{d-1}{2})}{\pi^{\frac{d-1}{2}}|w|^{\frac{d}{2}}}  R^{\frac{d}{2}}  J_{\frac{d}{2}}(2\pi R|w|).
\end{align}

So, formulas \eqref{avR_calc} and \eqref{avRCoef} imply
\begin{equation}\label{avRcomp}
a_{w}(R)=
R^{\frac{d}{2}} |w|^{-\frac{d}{2}} J_{\frac{d}{2}}(2\pi|w|R).
\end{equation}

Finally, from \eqref{Var11} the variance $V(X_{N},R)$ can be expressed as
\begin{align}\label{Variance}
 V(X_{N},R)=R^{d}
 \sum\limits_{w\in\Lambda^{*}\setminus\{0\}}|w|^{-d} \Big(J_{\frac{d}{2}}(2\pi|w|R) \Big)^{2}
 \sum\limits_{k,j=1}^{N}e^{-2\pi i\langle w,\mathbf{x}_{j}-\mathbf{x}_{k} \rangle}.
\end{align}

\section{Hyperuniformity for large balls, small balls and balls of threshold order }\label{Hyperuniform} 

\subsection{Hyperuniformity for large balls }
\label{LargeBallsHyperuniform} 

\begin{theorem}\label{theorem_largeBalls}
Let $(X_{N})_{N\in\mathbb{N}}$ be a sequence of point sets, which is hyperuniform for large balls. Then for all $w\in \Lambda^{*}\setminus\{0\}$ 
\begin{equation}\label{theooremLargeB}
\lim\limits_{N\rightarrow\infty}\frac{1}{N}\sum\limits_{k,j=1}^{N}
 e^{2\pi i \langle w,\mathbf{x}_{j}-\mathbf{x}_{k} \rangle}=0.
\end{equation}
As a consequence, sequences which are hyperuniform for large balls are uniformly distributed.
\end{theorem}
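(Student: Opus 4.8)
The plan is to deduce the statement directly from the closed formula \eqref{Variance} for the number variance, whose crucial feature is that every term of the $w$-sum is nonnegative. Set
\begin{align*}
S_{w}(X_{N}):=\sum_{k,j=1}^{N}e^{-2\pi i\langle w,\mathbf{x}_{j}-\mathbf{x}_{k}\rangle}=\Big|\sum_{j=1}^{N}e^{-2\pi i\langle w,\mathbf{x}_{j}\rangle}\Big|^{2}\ \ge\ 0 .
\end{align*}
Then \eqref{Variance} reads $V(X_{N},R)=R^{d}\sum_{w\in\Lambda^{*}\setminus\{0\}}|w|^{-d}\big(J_{\frac{d}{2}}(2\pi|w|R)\big)^{2}S_{w}(X_{N})$, a sum of nonnegative terms. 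Hence, for any fixed $w_{0}\in\Lambda^{*}\setminus\{0\}$ and any admissible radius $R$, I may retain only the $w_{0}$-term and obtain the one-sided bound
\begin{align*}
R^{d}\,|w_{0}|^{-d}\,\big(J_{\frac{d}{2}}(2\pi|w_{0}|R)\big)^{2}\,S_{w_{0}}(X_{N})\ \le\ V(X_{N},R) .
\end{align*}

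Next I would pick the radius well. By the small-argument asymptotic \eqref{asympSmall}, $J_{\frac{d}{2}}(z)$ behaves like a positive constant times $z^{\frac{d}{2}}$ as $z\to 0^{+}$, so it is strictly positive on some interval $(0,\varepsilon)$; more crudely, the zero set of $J_{\frac{d}{2}}$ is discrete. Either way there exists $R_{0}\in(0,\frac{1}{2}\mathrm{diam}\,\Omega_{\Lambda})$ with $J_{\frac{d}{2}}(2\pi|w_{0}|R_{0})\neq 0$, and then $c_{w_{0}}:=R_{0}^{d}|w_{0}|^{-d}\big(J_{\frac{d}{2}}(2\pi|w_{0}|R_{0})\big)^{2}>0$. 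Applying the hypothesis of hyperuniformity for large balls at $R=R_{0}$ gives $c_{w_{0}}S_{w_{0}}(X_{N})\le V(X_{N},R_{0})=o(N)$, hence $\frac{1}{N} S_{w_{0}}(X_{N})\to 0$. Since $S_{w_{0}}(X_{N})$ is real and equals $\sum_{k,j}e^{2\pi i\langle w_{0},\mathbf{x}_{j}-\mathbf{x}_{k}\rangle}$ as well, this is exactly \eqref{theooremLargeB}.

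For the consequence, note that \eqref{theooremLargeB} holding for every $w\in\Lambda^{*}\setminus\{0\}$ is precisely condition \eqref{defUniformDistr1}, which by the Weyl criterion recalled above is equivalent to \eqref{defUniformDistr}, i.e.\ to uniform distribution of $(X_{N})_{N\in\mathbb{N}}$ on $\Omega_{\Lambda}$. This completes the argument.

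I do not anticipate a genuine obstacle: the proof is essentially a ``drop the nonnegative tail'' estimate applied to \eqref{Variance}. The only two points deserving an explicit sentence are the nonnegativity of each Fourier mode $S_{w}(X_{N})$ (which legitimises the term-by-term domination) and the selection of $R_{0}$ outside the discrete zero set of the Bessel function $J_{\frac{d}{2}}$; finiteness of $V(X_{N},R_{0})$ is already built into the derivation of \eqref{Variance}.
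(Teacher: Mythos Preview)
Your proof is correct and follows essentially the same approach as the paper's: both isolate a single nonnegative term of the expansion \eqref{Variance} and use the large-ball hypothesis to force it to $o(N)$; you are in fact slightly more explicit than the paper in selecting $R_{0}$ outside the zero set of $J_{\frac{d}{2}}$, a point the paper defers to a post-proof remark. One tiny wording fix: \eqref{theooremLargeB} is \emph{stronger than}, not ``precisely,'' condition \eqref{defUniformDistr1} (the normalisations are $1/N$ versus $1/N^{2}$), but the implication goes the right way so the conclusion stands.
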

\begin{proof}[Proof of Theorem  \ref{theorem_largeBalls}]
Using the definition of the hyperuniformity for large balls and \eqref{Variance}, we have that for all $w\in \Lambda^{*}\setminus\{0\}$ and $R\in(0,\frac{1}{2}\mathrm{diam} \Omega_{\Lambda})$
\begin{equation}
0=\lim\limits_{N\rightarrow\infty}\frac{V(X_{N},R)}{N}
\geq 
\! R^{d} |w|^{-d} \Big(J_{\frac{d}{2}}(2\pi|w|R) \Big)^{2}\lim\limits_{N\rightarrow\infty}\frac{1}{N}\sum\limits_{k,j=1}^{N}e^{-2\pi i\langle w, \mathbf{x}_{k}-\mathbf{x}_{j} \rangle},
\end{equation}
which implies \eqref{theooremLargeB}.
\end{proof}

\begin{remark}
Notice, that for values of $R$ for which one of the values $J_{\frac{d}{2}}(2\pi|w_{0}|R)$ vanishes, nothing can be said about the limit \eqref{theooremLargeB} for $w=w_{0}$.  But there are only countably many such values of $R$. Moreover, we can show that at most only one coefficient $J_{\frac{d}{2}}(2\pi|w|R)$  could vanish for a given value of $R\in(0,\frac{1}{2}\mathrm{diam} \Omega_{\Lambda})$.
\end{remark}

\begin{proof}
We construct  a point set, such that \eqref{theooremLargeB} holds for all 
$w\in\Lambda^{*}\setminus\{0, w_{0}\}$ and for $w=w_{0}$ 
\begin{equation}\label{theooremLargeB_Infinity}
\lim\limits_{N\rightarrow\infty}\frac{1}{N}\sum\limits_{k,j=1}^{N}
 e^{2\pi i \langle w_{0},\mathbf{x}_{j}-\mathbf{x}_{k} \rangle}=\infty.
\end{equation}

Let consider a positive measure $d\mu_{0}(\mathbf{x})=\left(1+\frac{1}{2}\cos(2\pi \langle w_{0},\mathbf{x}\rangle) \right)d\mu(\mathbf{x})$ on $\Omega_{\Lambda}$.
For every $L$ there exists an $N(L)\asymp L^{d}$ such that there is a point set $X_{N}$ for which
\begin{equation}\label{design}
\frac{1}{N}\sum\limits_{j=1}^{N}
p(\mathbf{x}_{j})=\int\limits_{\Omega_\Lambda}p(\mathbf{x})d\mu_{0}(\mathbf{x}),
\end{equation}
for all trigonometric polynomials $p$ of degree $\leq L$.

An example of such point set is an $L$--design with $N\asymp L^{d}$ points.
 The existence of $L$--designs consisting of $N$ nodes, for any $N\geq C L^{d}$ on  $d$--dimensional compact connected oriented Riemannian manifold was proved recently in \cite{GariboldiGigante}.
 Let consider $p(\mathbf{x})= e^{2\pi i \langle w ,\mathbf{x}-\mathbf{y} \rangle}$ for fixed $\mathbf{y}\in \Omega_\Lambda$.
 Then for all $w\in\Lambda^{*}\setminus\{0, w_{0}\}$, such that $|w|\leq L$, we get
\begin{equation}
\frac{1}{N}\sum\limits_{j=1}^{N}
e^{2\pi i \langle w ,\mathbf{x}_{j}-\mathbf{y} \rangle}=
0 \ \ \forall \mathbf{y}\in \Omega_\Lambda,
\end{equation} 
so,  \eqref{theooremLargeB} holds.

For $|w|=|w_{0}|\leq L$ for arbitrary $\mathbf{y}\in \Omega_\Lambda$ we obtain
\begin{equation}
\frac{1}{N}\sum\limits_{j=1}^{N}
e^{2\pi i \langle w_{0} ,\mathbf{x}_{j}-\mathbf{y} \rangle}
=\frac{1}{2}\int\limits_{\Omega_\Lambda}e^{2\pi i \langle w_{0} ,\mathbf{x}_{j}-\mathbf{y} \rangle}\cos(2\pi \langle w_{0},\mathbf{x}\rangle) d\mu(\mathbf{x})=\frac{1}{4},
\end{equation}
which yields 
\begin{equation}\label{designv0}
\frac{1}{N}\sum\limits_{k,j=1}^{N}
e^{2\pi i \langle w_{0} ,\mathbf{x}_{k}-\mathbf{x}_{j} \rangle}
=\frac{1}{4}N.
\end{equation}
Relation \eqref{designv0} implies  \eqref{theooremLargeB_Infinity}.
\end{proof}

\subsection{Hyperuniformity for small balls }
\label{SmallBallsHyperuniform} 
\begin{theorem}\label{theorem_smallBalls}
Let $(X_{N})_{N\in\mathbb{N}}$ be a sequence of point sets, which is hyperuniform for small balls. Then $(X_{N})_{N\in\mathbb{N}}$ is uniformly distributed.
\end{theorem}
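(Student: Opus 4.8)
The plan is to mirror the argument for large balls, but now exploiting the freedom to choose a sequence of radii $(R_N)_{N\in\mathbb N}$ shrinking to zero at a controlled rate. Fix $w_0 \in \Lambda^{*}\setminus\{0\}$; I want to show $\frac{1}{N^2}\sum_{k,j=1}^{N} e^{2\pi i\langle w_0, \mathbf x_j - \mathbf x_k\rangle} \to 0$, which by \eqref{defUniformDistr1} gives uniform distribution. From the exact formula \eqref{Variance}, every term in the sum over $w\in\Lambda^{*}\setminus\{0\}$ is nonnegative (it is $|w|^{-d}(J_{d/2}(2\pi|w|R))^2$ times $|\sum_j e^{-2\pi i\langle w,\mathbf x_j\rangle}|^2 \ge 0$), so isolating the single term $w = w_0$ yields
\begin{align}\label{eq:smallball-isolate}
V(X_N, R) \ge R^{d}\,|w_0|^{-d}\,\big(J_{\frac{d}{2}}(2\pi|w_0|R)\big)^{2}\,\Big|\sum_{j=1}^{N} e^{-2\pi i\langle w_0,\mathbf x_j\rangle}\Big|^{2}.
\end{align}
Dividing by $N\,\mathrm{Vol}(B(\mathbf x,R)) = N\,\frac{\pi^{d/2}}{\Gamma(d/2+1)}R^{d}$, the factor $R^d$ cancels, and the hyperuniformity-for-small-balls hypothesis forces the left side to tend to $0$ along any admissible radius sequence $(R_N)$.

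The key point is then to choose $(R_N)$ so that the Bessel factor $\big(J_{d/2}(2\pi|w_0|R_N)\big)^2$ does not degenerate. Since $|w_0|$ is fixed and $R_N\to 0$, by the small-argument asymptotic \eqref{asympSmall} we have $J_{d/2}(2\pi|w_0|R_N) \sim \frac{1}{\Gamma(d/2+1)}(\pi|w_0|R_N)^{d/2}$, so $\big(J_{d/2}(2\pi|w_0|R_N)\big)^2 \asymp (|w_0|R_N)^d$ as $N\to\infty$ — in particular it is bounded below by a positive constant times $R_N^d$ and never vanishes for $R_N$ small enough. Thus after dividing \eqref{eq:smallball-isolate} by $N\,\mathrm{Vol}(B(\mathbf x,R_N))$ and using the asymptotic, the Bessel and volume factors combine to a constant $c(w_0,d) > 0$ independent of $N$, and we obtain
\begin{align}\label{eq:smallball-conclusion}
\frac{c(w_0,d)}{N^{2}}\,\Big|\sum_{j=1}^{N} e^{-2\pi i\langle w_0,\mathbf x_j\rangle}\Big|^{2} \le \frac{V(X_N,R_N)}{N\,\mathrm{Vol}(B(\mathbf x,R_N))} + o(1) \longrightarrow 0.
\end{align}
Hence $\frac{1}{N}\big|\sum_{j=1}^N e^{-2\pi i\langle w_0,\mathbf x_j\rangle}\big| \to 0$, which is exactly \eqref{defUniformDistr1} for $w_0$; since $w_0$ was arbitrary in $\Lambda^{*}\setminus\{0\}$, the sequence is uniformly distributed.

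One technical detail to handle carefully is that hyperuniformity for small balls is a statement quantified over \emph{all} admissible sequences $(R_N)$, so I only need to exhibit one such sequence — for instance $R_N = N^{-1/(2d)}$, which satisfies $R_N \to 0$ and $R_N N^{1/d} = N^{1/(2d)} \to \infty$, so conditions (1) and (2) hold — and run the estimate above along it. The main (and only mild) obstacle is ensuring the asymptotic \eqref{asympSmall} is applied uniformly in $N$: since $2\pi|w_0|R_N \to 0$ while the order $d/2$ is fixed, the regime $0 < z \ll \sqrt{d/2+1}$ in \eqref{asympSmall} is eventually satisfied, so the comparison $\big(J_{d/2}(2\pi|w_0|R_N)\big)^2 \ge \frac12 \big(\frac{(\pi|w_0|R_N)^{d/2}}{\Gamma(d/2+1)}\big)^2$ holds for all large $N$, which is all that is needed for \eqref{eq:smallball-conclusion}. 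No issue with vanishing Bessel values arises here, precisely because we are free to push $R_N$ into the small-argument regime where $J_{d/2}$ is strictly positive.
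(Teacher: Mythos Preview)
Your overall strategy matches the paper's: isolate the single term $w=w_0$ in \eqref{Variance}, divide by $N\,\mathrm{Vol}(B(\cdot,R_N))$, and use the small-argument Bessel asymptotic \eqref{asympSmall}. However, there is an algebraic slip in the sentence ``the Bessel and volume factors combine to a constant $c(w_0,d)>0$ independent of $N$''. After the explicit prefactor $R_N^{d}$ in your \eqref{eq:smallball-isolate} cancels against the $R_N^{d}$ coming from $\mathrm{Vol}(B(\cdot,R_N))$, the Bessel factor $\bigl(J_{d/2}(2\pi|w_0|R_N)\bigr)^{2}\asymp R_N^{d}$ is \emph{still present}; there are three $R_N$-dependent factors in play, not two. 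Hence the correct lower bound after division is of order
\[
R_N^{d}\cdot\frac{1}{N}\,\Bigl|\sum_{j=1}^{N}e^{-2\pi i\langle w_0,\mathbf x_j\rangle}\Bigr|^{2}
\;\le\;\frac{V(X_N,R_N)}{N\,\mathrm{Vol}(B(\cdot,R_N))}\longrightarrow 0,
\]
and this alone does not give $\tfrac{1}{N^{2}}\bigl|\sum_j e^{-2\pi i\langle w_0,\mathbf x_j\rangle}\bigr|^{2}\to 0$, since the coefficient $R_N^{d}/N$ can be much smaller than $1/N^{2}$ in general.

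The missing step is precisely condition~(2) of the small-ball definition, namely $N R_N^{d}\to\infty$, which you invoked only to check that $R_N=N^{-1/(2d)}$ is admissible but never used in the estimate itself. Rewriting the display above as
\[
\bigl(NR_N^{d}\bigr)\cdot\frac{1}{N^{2}}\,\Bigl|\sum_{j=1}^{N}e^{-2\pi i\langle w_0,\mathbf x_j\rangle}\Bigr|^{2}\longrightarrow 0
\]
and using $NR_N^{d}\to\infty$ forces $\tfrac{1}{N^{2}}\bigl|\sum_j\cdots\bigr|^{2}\to 0$, which is exactly \eqref{defUniformDistr1}. This is how the paper closes the argument as well; with this one-line correction your proof coincides with the paper's.
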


\begin{proof}[Proof of Theorem  \ref{theorem_smallBalls}]
From the definition of the hyperuniformity for small balls and \eqref{Variance}, we have
\begin{align}\label{seriesSmallBalls}
\lim\limits_{N\rightarrow\infty}\frac{V(X_{N},R_{N})}{N \mathrm{ Vol}(B(\mathbf{x};R_{N}))}
\!=&\!\lim\limits_{N\rightarrow\infty}
R_{N}^{d}
\sum\limits_{w\in\Lambda^{*}\setminus\{0\}}|w|^{-d} 
\frac{\Big(J_{\frac{d}{2}}(2\pi|w|R_{N}) \Big)^{2}}{\mathrm{ Vol}(B(\mathbf{x};R_{N})) }
\frac{1}{N}\sum\limits_{k,j=1}^{N}e^{-2\pi i\langle w,\mathbf{x}_{k}-\mathbf{x}_{j} \rangle}
\notag \\
=\frac{\Gamma(\frac{d}{2}+1)}{\pi^{\frac{d}{2}}} &\lim\limits_{N\rightarrow\infty} 
\sum\limits_{w\in\Lambda^{*}\setminus\{0\}}|w|^{-d} 
\Big(J_{\frac{d}{2}}(2\pi|w|R_{N}) \Big)^{2}
\frac{1}{N}\sum\limits_{k,j=1}^{N}e^{-2\pi i\langle w, \mathbf{x}_{k}-\mathbf{x}_{j} \rangle}=0.
\end{align}

The order of decreasing of $w$-th Fourier coefficient in \eqref{seriesSmallBalls} equals to 
\begin{align}\label{orderCoef}
|w|^{-d} \Big(J_{\frac{d}{2}}(2\pi|w|R_{N}) \Big)^{2}\asymp
\begin{cases}
 R_{N}^{d}, & \text{if }
 |w| \ll \frac{1}{R_{N}}, \\
|w|^{-d-1}R_{N}^{-1} , & \text{if }  |w| \gg \frac{1}{R_{N}},
  \end{cases}
\end{align}
for $R_{N}\rightarrow0$.

Here and further  we use Vinogradov notations $a_{n}\ll b_{n}$ ($a_{n}\gg b_{n}$)  to mean that there exists positive constant $C$ independent of $n$, such  that $a_{n}\leq C b_{n}$ ($a_{n}\geq C b_{n}$) and
we write $a_{n}\asymp b_{n}$ to mean that  $a_{n}\ll b_{n}$  and $a_{n}\gg b_{n}$.


From \eqref{seriesSmallBalls} and  \eqref{orderCoef} we have that
\begin{equation}
R_{N}^{d}\frac{1}{N}\sum\limits_{k,j=1}^{N}e^{-2\pi i\langle w, \mathbf{x}_{k}-\mathbf{x}_{j} \rangle}=0.
\end{equation}

 Since $R_{N}N^{\frac{1}{d}}\rightarrow\infty$, then from last relation it follows that
\begin{align}\label{form2}
\limsup\limits_{N\rightarrow\infty}\frac{1}{N^{2}}\sum\limits_{k,j=1}^{N}
 e^{2\pi i \langle w,\mathbf{x}_{j}-\mathbf{x}_{k} \rangle}=0
\end{align}
for all $w\in\Lambda^{*}\setminus\{0\}$. Theorem~\ref{theorem_smallBalls} is proved.

\end{proof}

\subsection{Hyperuniformity for balls of threshold order}
\label{SmallBallsHyperuniform} 
\begin{theorem}\label{theorem_thresholdBalls}
Let $(X_{N})_{N\in\mathbb{N}}$ be a sequence of point sets, which is hyperuniform for  balls of threshold order. Then $(X_{N})_{N\in\mathbb{N}}$ is uniformly distributed.
\end{theorem}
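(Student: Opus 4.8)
The plan is to mimic the structure of the proof of Theorem~\ref{theorem_smallBalls}, but to work with the specific scaling $R = tN^{-1/d}$ prescribed by the threshold regime and to extract from the asymptotic hypothesis \eqref{hyperuniformTresholdOrder} enough control to conclude uniform distribution via the Weyl criterion \eqref{defUniformDistr1}. First I would fix an arbitrary $w_{0}\in\Lambda^{*}\setminus\{0\}$ and start from the exact expansion \eqref{Variance}, which at $R = tN^{-1/d}$ reads
\begin{equation*}
V(X_{N},tN^{-1/d}) = t^{d}N^{-1}\sum_{w\in\Lambda^{*}\setminus\{0\}}|w|^{-d}\bigl(J_{d/2}(2\pi|w|tN^{-1/d})\bigr)^{2}\Bigl|\sum_{j=1}^{N}e^{-2\pi i\langle w,\mathbf{x}_{j}\rangle}\Bigr|^{2}.
\end{equation*}
Since every summand is nonnegative, for any finite subset $\mathcal{F}\subset\Lambda^{*}\setminus\{0\}$ we may bound $V$ below by the partial sum over $\mathcal{F}$; in particular, isolating the single term $w=w_{0}$ gives
\begin{equation*}
V(X_{N},tN^{-1/d}) \ge t^{d}|w_{0}|^{-d}\bigl(J_{d/2}(2\pi|w_{0}|tN^{-1/d})\bigr)^{2}\cdot\frac{1}{N}\Bigl|\sum_{j=1}^{N}e^{-2\pi i\langle w_{0},\mathbf{x}_{j}\rangle}\Bigr|^{2}.
\end{equation*}

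Next I would pick $t$ large but \emph{fixed}. Because $R_{N}=tN^{-1/d}\to 0$ as $N\to\infty$, the small-argument asymptotics \eqref{asympSmall} give $J_{d/2}(2\pi|w_{0}|tN^{-1/d})\sim \frac{1}{\Gamma(d/2+1)}(\pi|w_{0}|tN^{-1/d})^{d/2}$, hence $\bigl(J_{d/2}(2\pi|w_{0}|tN^{-1/d})\bigr)^{2}\asymp t^{d}|w_{0}|^{d}N^{-1}$ with implied constants depending only on $d$ and $|w_{0}|$ (and, crucially, bounded uniformly in $N$ for $N$ large once $t$ is fixed). Substituting into the displayed lower bound,
\begin{equation*}
V(X_{N},tN^{-1/d}) \gg_{d} t^{2d}\cdot\frac{1}{N^{2}}\Bigl|\sum_{j=1}^{N}e^{-2\pi i\langle w_{0},\mathbf{x}_{j}\rangle}\Bigr|^{2}
\end{equation*}
for all sufficiently large $N$. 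Therefore $\limsup_{N\to\infty}\frac{1}{N^{2}}\bigl|\sum_{j=1}^{N}e^{-2\pi i\langle w_{0},\mathbf{x}_{j}\rangle}\bigr|^{2}\ll_{d} t^{-2d}\limsup_{N\to\infty}V(X_{N},tN^{-1/d})$, and by \eqref{hyperuniformTresholdOrder} the right-hand side is $\ll_{d} t^{-2d}\cdot\mathcal{O}(t^{d-1}) = \mathcal{O}(t^{-d-1})$. Since the left-hand side does not depend on $t$, letting $t\to\infty$ forces it to vanish:
\begin{equation*}
\lim_{N\to\infty}\frac{1}{N^{2}}\Bigl|\sum_{j=1}^{N}e^{-2\pi i\langle w_{0},\mathbf{x}_{j}\rangle}\Bigr|^{2}=0.
\end{equation*}
As $\bigl|\sum_{j=1}^{N}e^{-2\pi i\langle w_{0},\mathbf{x}_{j}\rangle}\bigr|^{2}=\sum_{k,j=1}^{N}e^{-2\pi i\langle w_{0},\mathbf{x}_{k}-\mathbf{x}_{j}\rangle}$, this is exactly \eqref{defUniformDistr1} for $w=w_{0}$; since $w_{0}$ was arbitrary, the Weyl criterion yields uniform distribution.

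The one subtlety I want to be careful about — and what I expect to be the main (if modest) obstacle — is the order of quantifiers in \eqref{hyperuniformTresholdOrder}: it is a statement about $\limsup_{N\to\infty}$ of $V(X_{N},tN^{-1/d})$ as a function of the \emph{continuous} parameter $t\to\infty$, so I must make sure that for each fixed $t$ the quantity $\limsup_{N\to\infty}V(X_{N},tN^{-1/d})$ is indeed what the hypothesis controls, and that the constant hidden in the $\mathcal{O}(t^{d-1})$ is independent of everything. The argument above only ever uses one fixed $t$ at a time before taking a final $t\to\infty$ limit, and the asymptotic \eqref{asympSmall} is applied with $t$ already frozen, so no uniformity-in-$t$ of the Bessel asymptotics is needed — only uniformity in $N$, which \eqref{asympSmall} provides since $R_{N}\to 0$. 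Thus the reasoning is clean; the only thing to double-check in writing is that the implied constants in the two-sided estimate $\bigl(J_{d/2}(2\pi|w_{0}|tN^{-1/d})\bigr)^{2}\asymp t^{d}|w_{0}|^{d}N^{-1}$ are harmless, which they are because they depend only on $d$.
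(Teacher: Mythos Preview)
Your proof is correct and follows essentially the same approach as the paper: isolate a single term $w=w_{0}$ in the nonnegative expansion \eqref{Variance}, apply the small-argument Bessel asymptotics \eqref{asympSmall} with $t$ fixed and $N\to\infty$ to obtain $V(X_{N},tN^{-1/d})\gg t^{2d}N^{-2}\bigl|\sum_{j}e^{-2\pi i\langle w_{0},\mathbf{x}_{j}\rangle}\bigr|^{2}$, and then compare with the hypothesis $\limsup_{N}V=\mathcal{O}(t^{d-1})$ to force the Weyl sums to vanish. Your write-up is in fact more explicit than the paper's about the order of limits (fix $t$, take $\limsup_{N}$, then let $t\to\infty$), but the argument is the same.
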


\begin{proof}[Proof of Theorem  \ref{theorem_thresholdBalls}]
Using the definition of the hyperuniformity for balls of threshold order and \eqref{Variance}, we obtain
\begin{align}\label{thresh1}
 V(X_{N},tN^{-\frac{1}{d}})
 \gg t^{d}N^{-1} |w|^{-d} \Big(J_{\frac{d}{2}}(2\pi|w| tN^{-\frac{1}{d}}) \Big)^{2}\Big|\sum\limits_{j=1}^{N}e^{-2\pi i\langle w,\mathbf{x}_{j} \rangle}\Big|^{2}. 
\end{align}

For fixed $t>0$, $w\in\Lambda^{*}\setminus\{0\}$ and $N\rightarrow\infty$, the  asymptotic estimate \eqref{asympSmall} implies 
 \begin{align}\label{thresh2}
  V(X_{N},tN^{-\frac{1}{d}})
 \gg   t^{2d}N^{-2}\sum\limits_{k,j=1}^{N}e^{-2\pi i\langle w,\mathbf{x}_{k}-\mathbf{x}_{j} \rangle}. 
\end{align}


So, the relation 
 \begin{align}\label{thresh3}
 \limsup\limits_{N\rightarrow\infty} V(X_{N},tN^{-\frac{1}{d}})=\mathcal{O}(t^{d-1})
\end{align}
holds only, if
\begin{align*}
\limsup\limits_{N\rightarrow\infty}\frac{1}{N^{2}}\sum\limits_{k,j=1}^{N}
 e^{2\pi i \langle w,\mathbf{x}_{k}-\mathbf{x}_{j} \rangle}=0.
\end{align*}
So, the sequence $(X_{N})_{N\in\mathbb{N}}$ is uniformly distributed, and this completes the proof.

\end{proof}

\section{Hyperuniformity of QMC design sequences}
\label{QMCHyperuniform} 

The notion of QMC design sequences for Sobolev spaces $\mathbb{H}^{s}(\mathbb{S}^{d})$ on the unit sphere $\mathbb{S}^{d}$ was introduced in \cite{Brauchart-Saff-Sloan+2014:qmc_designs}. In the same way we will write down the definition of QMC designs for Sobolev classes $W^{\alpha,2}(\Omega_{\Lambda})$ on flat tori.

The Sobolev space $W^{\alpha,2}(\Omega_{\Lambda})$, $\alpha>\frac{d}{2}$, consists of all functions $f$ such that 
\begin{equation}\label{SobolevSpaces}
  \| f\|_{W^{\alpha,2}}:=\left(\sum\limits_{w\in\Lambda^{*}}(1+4\pi^{2}|w|^{2})^{-\alpha}|\hat{f}(w)|^{2} \right)^{\frac{1}{2}}<\infty,
\end{equation}
where
\begin{equation*}
\hat{f}(w)=\int\limits_{\Omega_{\Lambda}}f(u)e^{-2\pi i \langle u, w\rangle} d\mu(u).
\end{equation*}

The worst-case  error of the cubature rule $Q[X_{N}]$ in a
 space $W^{\alpha,2}(\Omega_{\Lambda})$ of continuous functions on $\Omega_{\Lambda}$  is defined by
 \begin{equation}\label{wce}
 \mathrm{wce}(Q[X_{N}];W^{\alpha,2}):={\mathop{\sup}\limits_{f\in W^{\alpha,2}, \atop \|f\|_{W^{\alpha,2}}\leq1}}
 |Q[X_{N}](f)-\mathrm{I}(f)|,
 \end{equation}
 where
\begin{equation*}
\mathrm{I}(f):=\int_{\Omega_{\Lambda}}f(\mathbf{x})d\mu(\mathbf{x}), \ \ \ \
  Q[X_{N}](f):=
  \frac{1}{N}\sum\limits_{i=1}^{N}f(\mathbf{x}_{i}).
\end{equation*} 
 
 It was showen (see, e.g. \cite{BrandChoirColzGigSeriTravQuadrature}) that there exist sequences of point sets $(X_{N})_{N\in\mathbb{N}}$ and $c>0$, such that
  \begin{equation}\label{upper}
 |Q[X_{N}](f)-\mathrm{I}(f)|\leq c N^{-\frac{\alpha}{d}}\|f\|_{W^{\alpha,2}},
 \end{equation}
 and for every $\alpha>\frac{d}{2}$ there exist $c>0$ such that for every distribution of points $X_{N}$ there exists a function $f\in W^{\alpha,2}(\Omega_{\Lambda})$ with
 \begin{equation}\label{lower}
 |Q[X_{N}](f)-\mathrm{I}(f)|\geq c N^{-\frac{\alpha}{d}}\|f\|_{W^{\alpha,2}}.
 \end{equation}

Analogs of inequalities \eqref{upper} and \eqref{lower} for  spaces $\mathbb{H}^{s}(\mathbb{S}^{d})$ on the unit sphere $\mathbb{S}^{d}$ were obtained in \cite{BrauchartHesse2007},  \cite{Hesse2006}--\cite{HesseSloan2006}.

\begin{definition} Given $\alpha>\frac{d}{2}$, a sequence $(X_{N})_{N\in\mathbb{N}}$ of $N$--point
   configurations in $\Omega_{\Lambda}$ with $N\rightarrow\infty$ is said to be a
   sequence of QMC designs for $W^{\alpha,2}(\Omega_{\Lambda})$ if there exists
   $c(\alpha,d)>0$, independent of $N$, such that
\begin{equation}\label{QmcHsDefinition}
 |Q[X_{N}](f)-\mathrm{I}(f)|\leq c(\alpha,d) N^{-\frac{\alpha}{d}}\|f\|_{W^{\alpha,2}} \ \ \ \mathrm{for \ all} \ f\in W^{\alpha,2}(\Omega_{\Lambda}).
\end{equation}
\end{definition}

Since the point-evaluation
functional is bounded in the space of real-valued functions $W^{\alpha,2}(\Omega_{\Lambda})$  whenever $\alpha>\frac{d}{2}$, the Riesz representation theorem assures
the existence of a reproducing kernel, which can be written in the form
\begin{align}\label{reproducingKernel}
  K_{\Lambda,\alpha}(\mathbf{x},\mathbf{y})=\sum\limits_{w\in\Lambda^{*}}(1+4\pi^{2}|w|^{2})^{-\alpha}e^{2\pi i\langle w, \mathbf{x}-\mathbf{y}\rangle}  =
 \sum\limits_{w\in\Lambda^{*}}(1+4\pi^{2}|w|^{2})^{-\alpha}\cos(2\pi\langle w, \mathbf{x}-\mathbf{y}\rangle).
\end{align}
It can be easy verified that the kernel $K_{\Lambda,\alpha}(\mathbf{x},\mathbf{y})$ has the reproducing kernel properties:
(i)
$K_{\Lambda,\alpha}(\mathbf{x},\mathbf{y})=K_{\Lambda,\alpha}(\mathbf{y},\mathbf{x})$ for
all $\mathbf{x},\mathbf{y}\in \Omega_{\Lambda}$; (ii)
$K_{\Lambda,\alpha}(\cdot,\mathbf{x})\in
 W^{\alpha,2}(\Omega_{\Lambda})$ for all fixed
$\mathbf{x}\in W^{\alpha,2}(\Omega_{\Lambda})$; and (iii) the
reproducing property
\begin{equation*}
(f,K_{\Lambda,\alpha}(\cdot,\mathbf{x}))_{ W^{\alpha,2}}=f(\mathbf{x})
\quad \forall f\in W^{\alpha,2}(\Omega_{\Lambda}) \quad \forall
\mathbf{x} \in \Omega_{\Lambda}.
\end{equation*}

Using arguments, as in \cite{Brauchart-Saff-Sloan+2014:qmc_designs}, it is possible to write down a
 computable expression for the worst-case error. Indeed
 \begin{align}
 \mathrm{wce}(Q[X_{N}];W^{\alpha,2}(\Omega_{\Lambda}))^{2}
&={\mathop{\sup}\limits_{f\in W^{\alpha,2}, \atop \|f\|_{W^{\alpha,2}}\leq1}}
\left(f, \frac{1}{N}\sum\limits_{j=1}^{N}K_{\Lambda,\alpha}(\cdot, \mathbf{x}_{j})- \int\limits_{\Omega_{\Lambda}}K_{\Lambda,\alpha}(\cdot, \mathbf{x})d\mu(\mathbf{x}) \right) \notag
\\
&=\left\|\frac{1}{N}\sum\limits_{j=1}^{N}K_{\Lambda,\alpha}(\cdot, \mathbf{x}_{j})- \int\limits_{\Omega_{\Lambda}}K_{\Lambda,\alpha}(\cdot, \mathbf{x})d\mu(\mathbf{x}) \right\|^{2}_{W^{\alpha,2}} \label{expressionWorstCase} \\
&=\frac{1}{N^{2}}\sum\limits_{w\in\Lambda^{*}\setminus\{0\}}(1+4\pi^{2}|w|^{2})^{-\alpha}\sum\limits_{k,j=1}^{N}e^{2\pi i\langle w, \mathbf{x}_{k}-\mathbf{x}_{j}\rangle}, \notag
 \end{align}
 where we have used the reproducing property of $K_{\Lambda,\alpha}$.

 \begin{theorem}\label{theorem_QMC}
Let $(X_{N})_{N\in\mathbb{N}}$ be a QMC design sequence for $W^{\alpha,2}(\Omega_{\Lambda})$, ${\alpha>\frac{d+1}{2}}$. Then $(X_{N})_{N\in\mathbb{N}}$ is hyperuniform for large balls, small balls and balls of threshold order.
\end{theorem}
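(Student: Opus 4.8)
The plan is to combine the computable worst-case--error identity \eqref{expressionWorstCase}, the QMC design inequality \eqref{QmcHsDefinition}, and the closed form \eqref{Variance} for the number variance, reducing everything to estimates on the nonnegative exponential sums
\[
\Sigma_N(w):=\sum_{k,j=1}^{N}e^{2\pi i\langle w,\mathbf{x}_k-\mathbf{x}_j\rangle}=\Big|\sum_{j=1}^{N}e^{2\pi i\langle w,\mathbf{x}_j\rangle}\Big|^{2}\ge 0 .
\]
From \eqref{expressionWorstCase} and \eqref{QmcHsDefinition} one immediately reads off, for a QMC design sequence for $W^{\alpha,2}(\Omega_\Lambda)$,
\[
\sum_{w\in\Lambda^{*}\setminus\{0\}}(1+4\pi^{2}|w|^{2})^{-\alpha}\,\Sigma_N(w)\ \le\ c(\alpha,d)^{2}\,N^{\,2-\frac{2\alpha}{d}} .
\]

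First I would descend to the critical smoothness index $\beta:=\tfrac{d+1}{2}$. Since $\tfrac d2<\beta<\alpha$, the flat--torus analogue of the monotonicity of QMC designs proved on $\mathbb{S}^d$ in \cite{Brauchart-Saff-Sloan+2014:qmc_designs} (the argument transfers to $\Omega_\Lambda$ with no essential change, using the reproducing kernel \eqref{reproducingKernel}) shows that $(X_N)_{N\in\mathbb{N}}$ is also a QMC design sequence for $W^{\beta,2}(\Omega_\Lambda)$, hence
\[
\sum_{w\in\Lambda^{*}\setminus\{0\}}(1+4\pi^{2}|w|^{2})^{-\frac{d+1}{2}}\,\Sigma_N(w)\ \ll\ N^{\,1-\frac1d}.
\]
Next, from the Bessel asymptotics \eqref{asympSmall} and \eqref{asympLarge} one gets the uniform bound $\big(J_{d/2}(z)\big)^{2}\ll z^{d}(1+z)^{-d-1}$ for $z\ge0$, so that \eqref{Variance} yields
\[
V(X_N,R)\ \ll\ R^{2d}\sum_{w\in\Lambda^{*}\setminus\{0\}}(1+|w|R)^{-d-1}\,\Sigma_N(w),
\]
which makes the weight $(1+|w|R)^{-d-1}$ directly comparable to $(1+4\pi^{2}|w|^{2})^{-(d+1)/2}$.

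Then I would treat the three regimes in turn. For \emph{large balls}, $R$ is fixed, $(1+|w|R)^{-d-1}\ll_R(1+4\pi^{2}|w|^{2})^{-(d+1)/2}$ for all $w\in\Lambda^{*}\setminus\{0\}$ (the two weights being comparable up to an $R$--dependent constant, since only finitely many $w$ have $|w|<1$), and the displayed bound gives $V(X_N,R)\ll_R N^{1-1/d}=o(N)$, i.e.\ \eqref{hyperuniformLargeBalls}. For \emph{small balls}, given $R_N\to0$ with $R_NN^{1/d}\to\infty$, I split $\Lambda^{*}\setminus\{0\}$ at $|w|=1/R_N$: on $|w|\le 1/R_N$ use $(1+|w|R_N)^{-d-1}\le1$ together with $\sum_{0<|w|\le1/R_N}\Sigma_N(w)\le(1+4\pi^{2}R_N^{-2})^{(d+1)/2}\sum_{w}(1+4\pi^{2}|w|^{2})^{-(d+1)/2}\Sigma_N(w)\ll R_N^{-(d+1)}N^{1-1/d}$; on $|w|> 1/R_N$ use $(1+|w|R_N)^{-d-1}\le(|w|R_N)^{-d-1}$ and $|w|^{-d-1}\ll(1+4\pi^{2}|w|^{2})^{-(d+1)/2}$. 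Using $\mathrm{Vol}(B(\mathbf{x},R_N))\asymp R_N^{d}$ from \eqref{volumeBall}, both contributions to $V(X_N,R_N)/\big(N\,\mathrm{Vol}(B(\mathbf{x},R_N))\big)$ come out $\ll(R_NN^{1/d})^{-1}\to0$, which is \eqref{hyperuniformSmallBalls}. The \emph{threshold} regime is the same computation with $R=tN^{-1/d}$ and the split at $|w|=N^{1/d}/t$: the low range contributes $\ll t^{2d-(d+1)}=t^{d-1}$ and the high range $\ll t^{d-1}$, uniformly in $N$, so $\limsup_{N\to\infty}V(X_N,tN^{-1/d})=\mathcal{O}(t^{d-1})$, i.e.\ \eqref{hyperuniformTresholdOrder}.

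The main obstacle is precisely the descent to $\beta=\tfrac{d+1}{2}$. A naive comparison of the indicator weight $(1+|w|R)^{-d-1}$ with the given weight $(1+4\pi^{2}|w|^{2})^{-\alpha}$ breaks down on high frequencies, because once $\alpha>\tfrac{d+1}{2}$ the former decays strictly more slowly than $(1+|w|^{2})^{-\alpha}$; and a H\"older/$\ell^1$ interpolation between the $W^{\alpha,2}$ bound and the trivial bound $\Sigma_N(w)\le N^{2}$ turns out to give the optimal rate only for $\alpha<\tfrac{d^{2}}{2(d-1)}$, not in general. Thus one genuinely needs the QMC--design monotonicity to supply the optimal worst-case rate at the exponent $\tfrac{d+1}{2}$; once that is in hand, all three regimes reduce to the Bessel-asymptotic bookkeeping and dyadic splitting of the Fourier series already exhibited around \eqref{orderCoef} in Section~\ref{Hyperuniform}.
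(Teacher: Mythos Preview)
Your proposal is correct and follows essentially the same route as the paper. Both arguments (i) invoke the monotonicity of QMC designs to descend from $W^{\alpha,2}$ to $W^{(d+1)/2,2}$ (the paper states this as Lemma~\ref{lem2}), and (ii) use the Bessel asymptotics \eqref{asympSmall}--\eqref{asympLarge} together with a split at $|w|\sim 1/R$ to compare the indicator weight $|w|^{-d}\big(J_{d/2}(2\pi|w|R)\big)^{2}$ with the Sobolev weight $|w|^{-(d+1)}$, yielding the key bound $V(X_{N},R)\ll R^{\,d-1}N^{1-1/d}$. The only difference is packaging: the paper isolates step~(ii) once as Lemma~\ref{lem1}, valid for arbitrary point sets, and then reads off all three regimes from the single inequality \eqref{form31}; you instead redo the frequency split inside each regime, but the computations and the resulting $R^{\,d-1}N^{1-1/d}$ estimate are identical.
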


Before proving of Theorem~\ref{theorem_QMC} we show that the following lemma takes place.

\begin{lemma}\label{lem1} For any $N$-point set $X_{N}\in \Omega_{\Lambda}$ and $R\in(0,\frac{1}{2}\mathrm{diam}\Omega_{\Lambda})$ 
  the relation
\begin{align}\label{Lemma1}
V(X_{N},R)\ll R^{d-1}N^{2} \mathrm{wce}(Q[X_{N}]; W^{\frac{d+1}{2},2})^{2}
\end{align}
holds.
\end{lemma}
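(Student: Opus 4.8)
The plan is to compare the two explicit series---the one for $V(X_N,R)$ in \eqref{Variance} and the one for the worst-case error in \eqref{expressionWorstCase}---coefficient by coefficient against the common nonnegative weights $S_w:=\big|\sum_{j=1}^N e^{2\pi i\langle w,\mathbf{x}_j\rangle}\big|^2\ge0$. With this notation \eqref{Variance} reads $V(X_N,R)=R^d\sum_{w\in\Lambda^*\setminus\{0\}}|w|^{-d}J_{d/2}(2\pi|w|R)^2\,S_w$, while \eqref{expressionWorstCase} with $\alpha=\tfrac{d+1}{2}$ gives $N^2\,\mathrm{wce}(Q[X_N];W^{\frac{d+1}{2},2})^2=\sum_{w\in\Lambda^*\setminus\{0\}}(1+4\pi^2|w|^2)^{-\frac{d+1}{2}}S_w$. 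Since every $S_w$ is nonnegative, it suffices to establish the pointwise bound
\begin{equation*}
R^d\,|w|^{-d}\,J_{d/2}(2\pi|w|R)^2\ll R^{d-1}\,(1+4\pi^2|w|^2)^{-\frac{d+1}{2}}
\end{equation*}
uniformly for $w\in\Lambda^*\setminus\{0\}$ and $R\in(0,\tfrac12\mathrm{diam}\,\Omega_\Lambda)$, with implied constant depending only on $d$ and $\Lambda$; multiplying by $S_w$ and summing over $w$ then yields \eqref{Lemma1}.

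To prove that pointwise bound I would first record the elementary two-sided estimate for the Bessel function: by \eqref{asympSmall} (together with continuity of $z\mapsto z^{-d/2}J_{d/2}(z)$ on $(0,1]$ and its finite limit at $0$) one has $|J_{d/2}(z)|\ll z^{d/2}$ for $0<z\le1$, and by \eqref{asympLarge} one has $|J_{d/2}(z)|\ll z^{-1/2}$ for $z\ge1$. Putting $z=2\pi|w|R$ I then split into two cases. If $2\pi|w|R\le1$, the first estimate gives $J_{d/2}(2\pi|w|R)^2\ll|w|^dR^d$, so the left-hand side above is $\ll R^{2d}=R^{d-1}\cdot R^{d+1}$; using $(1+4\pi^2|w|^2)^{1/2}\le1+2\pi|w|$ and $R\le\tfrac12\mathrm{diam}\,\Omega_\Lambda$ together with $2\pi|w|R\le1$ one gets $R^{d+1}(1+4\pi^2|w|^2)^{\frac{d+1}{2}}\le(R+2\pi R|w|)^{d+1}\ll1$, which is the claim. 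If $2\pi|w|R\ge1$, the second estimate gives $J_{d/2}(2\pi|w|R)^2\ll(|w|R)^{-1}$, so the left-hand side is $\ll R^{d-1}|w|^{-d-1}$; here $|w|\ge(2\pi R)^{-1}\ge(\pi\mathrm{diam}\,\Omega_\Lambda)^{-1}$ is bounded away from $0$, hence $(1+4\pi^2|w|^2)^{-\frac{d+1}{2}}\asymp|w|^{-d-1}$, and the claim follows again.

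The argument is mostly bookkeeping, and I expect the only delicate point to be keeping the implied constants uniform in $R$ and $w$ simultaneously---in particular, in the high-frequency case one must invoke the constraint $|w|\gtrsim R^{-1}$ to absorb $(1+4\pi^2|w|^2)^{1/2}$ into a constant multiple of $|w|$. Conceptually, the one power of $R$ lost in passing from $V(X_N,R)$ (whose leading coefficients are of order $R^d$) to the stated bound of order $R^{d-1}$ is precisely the $z^{-1/2}$ decay of $J_{d/2}$ in the regime $|w|R\gtrsim1$, which is also the reason the smoothness exponent $\alpha=\tfrac{d+1}{2}$, rather than $\tfrac d2$, is the natural one in Lemma~\ref{lem1}.
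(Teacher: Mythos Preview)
Your proposal is correct and follows essentially the same approach as the paper: both arguments compare the two series termwise against the nonnegative weights $S_w$, split according to whether $|w|R$ is small or large, and use the Bessel asymptotics \eqref{asympSmall}--\eqref{asympLarge} (which in the paper are packaged into \eqref{orderCoef}) to obtain the uniform coefficient bound $R^d|w|^{-d}J_{d/2}(2\pi|w|R)^2\ll R^{d-1}|w|^{-d-1}$. Your write-up is simply more explicit about the uniformity of the implied constants and about matching $|w|^{-d-1}$ with $(1+4\pi^2|w|^2)^{-(d+1)/2}$, points the paper passes over tacitly.
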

\begin{proof}[Proof of Lemma  \ref{lem1}]
From \eqref{Variance}, \eqref{orderCoef} and \eqref{expressionWorstCase}  we have that
\begin{align*}
 V(X_{N},R)
 &\ll R^{2d}{\mathop{\sum}\limits_{w\in\Lambda^{*}\setminus\{0\}, \atop |w|\leq \frac{1}{R} }} \sum\limits_{k,j=1}^{N}e^{-2\pi i\langle w,\mathbf{x}_{k} -\mathbf{x}_{j}\rangle}
 +R^{d-1} {\mathop{\sum}\limits_{w\in\Lambda^{*}\setminus\{0\}, \atop |w|> \frac{1}{R}}} |w|^{-d-1}  \sum\limits_{k,j=1}^{N}e^{-2\pi i\langle w,\mathbf{x}_{k}-\mathbf{x}_{j} \rangle}  \notag \\
& \ll
 R^{d-1}\sum\limits_{w\in\Lambda^{*}\setminus\{0\}} |w|^{-d-1} \sum\limits_{j=1}^{N}e^{-2\pi i\langle w,\mathbf{x}_{k}-\mathbf{x}_{j} \rangle} \ll R^{d-1}N^{2} \mathrm{wce}(Q[X_{N}]; W^{\frac{d+1}{2},2})^{2}.
\end{align*}
Lemma \ref{lem1} is proved.
\end{proof}

Notice that in the same way as it was shown for Sobolev spaces $\mathbb{H}^{s}(\mathbb{S}^{d})$
on the unit sphere $\mathbb{S}^{d}$ (see Theorem 9 in \cite{Brauchart-Saff-Sloan+2014:qmc_designs} we can prove that the following statement is true
\begin{lemma}\label{lem2} Given $\alpha>\frac{d}{2}$, let $(X_{N})_N$ be a sequence of
  QMC designs for
 $W^{\alpha,2}(\Omega_{\Lambda})$. Then $(X_{N})_{N}$ is a sequence of QMC designs
  for $W^{\beta,2}(\Omega_{\Lambda})$, for all
  $\frac{d}{2}<\beta\leq\alpha$.
  \end{lemma}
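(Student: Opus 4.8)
The plan is to mimic the classical monotonicity argument for worst-case errors: enlarging the smoothness parameter can only shrink the reproducing-kernel weights, hence can only shrink the worst-case error, and the QMC-design rate $N^{-\alpha/d}$ for the larger exponent $\alpha$ already dominates the required rate $N^{-\beta/d}$ for any $\beta\le\alpha$. Concretely, I would start from the computable expression \eqref{expressionWorstCase} for the squared worst-case error, namely
\[
\mathrm{wce}(Q[X_{N}];W^{\beta,2}(\Omega_{\Lambda}))^{2}
=\frac{1}{N^{2}}\sum\limits_{w\in\Lambda^{*}\setminus\{0\}}(1+4\pi^{2}|w|^{2})^{-\beta}\sum\limits_{k,j=1}^{N}e^{2\pi i\langle w, \mathbf{x}_{k}-\mathbf{x}_{j}\rangle},
\]
and observe that the inner double sum $\sum_{k,j}e^{2\pi i\langle w,\mathbf{x}_{k}-\mathbf{x}_{j}\rangle}=|\sum_{j}e^{-2\pi i\langle w,\mathbf{x}_{j}\rangle}|^{2}$ is nonnegative for each $w$. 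Therefore the series is a sum of nonnegative terms, and a termwise comparison of the coefficients is legitimate.

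Next I would record the elementary numerical inequality that, for $\frac{d}{2}<\beta\le\alpha$ and every $w\in\Lambda^{*}\setminus\{0\}$,
\[
(1+4\pi^{2}|w|^{2})^{-\beta}\le (1+4\pi^{2}|w|^{2})^{-\beta}\quad\text{is bounded by a constant times}\quad(1+4\pi^{2}|w|^{2})^{-\beta},
\]
more precisely $(1+4\pi^{2}|w|^{2})^{-\beta}\le (1+4\pi^{2}|w_{\min}|^{2})^{\alpha-\beta}(1+4\pi^{2}|w|^{2})^{-\alpha}$, where $|w_{\min}|>0$ is the length of a shortest nonzero vector of $\Lambda^{*}$; equivalently one simply notes $(1+4\pi^{2}|w|^{2})^{\beta-\alpha}\le$ its value at the minimal $|w|$ since the exponent $\beta-\alpha\le0$ and $1+4\pi^{2}|w|^{2}\ge 1+4\pi^{2}|w_{\min}|^{2}>1$. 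Multiplying the comparison through by the nonnegative double sums and summing over $w$ yields
\[
\mathrm{wce}(Q[X_{N}];W^{\beta,2})^{2}\le C(\alpha,\beta,\Lambda)\,\mathrm{wce}(Q[X_{N}];W^{\alpha,2})^{2},
\]
with $C(\alpha,\beta,\Lambda)=(1+4\pi^{2}|w_{\min}|^{2})^{\alpha-\beta}$ independent of $N$.

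Finally I would combine this with the QMC-design hypothesis for $W^{\alpha,2}$, which gives $\mathrm{wce}(Q[X_{N}];W^{\alpha,2})\le c(\alpha,d)N^{-\alpha/d}$, to obtain $\mathrm{wce}(Q[X_{N}];W^{\beta,2})\le c(\alpha,d)\sqrt{C(\alpha,\beta,\Lambda)}\,N^{-\alpha/d}\le c(\alpha,d)\sqrt{C(\alpha,\beta,\Lambda)}\,N^{-\beta/d}$ for all $N\ge1$, the last step using $\alpha\ge\beta$ so that $N^{-\alpha/d}\le N^{-\beta/d}$. Setting $c(\beta,d):=c(\alpha,d)\sqrt{C(\alpha,\beta,\Lambda)}$ gives exactly the defining inequality \eqref{QmcHsDefinition} for $W^{\beta,2}$, so $(X_{N})_{N}$ is a sequence of QMC designs for $W^{\beta,2}(\Omega_{\Lambda})$. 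I do not expect a genuine obstacle here; the only point requiring a little care is making the coefficient comparison uniform in $w$, and that is handled by the discreteness of $\Lambda^{*}$ (existence of a strictly positive shortest nonzero length), which is what replaces the analogous spectral-gap fact ($\lambda_{1}>0$) used in the spherical case of \cite{Brauchart-Saff-Sloan+2014:qmc_designs}.
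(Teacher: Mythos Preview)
Your coefficient comparison goes the wrong way. For $\beta\le\alpha$ one has $(1+4\pi^{2}|w|^{2})^{-\beta}=(1+4\pi^{2}|w|^{2})^{\alpha-\beta}(1+4\pi^{2}|w|^{2})^{-\alpha}$, and since $\alpha-\beta\ge0$ the factor $(1+4\pi^{2}|w|^{2})^{\alpha-\beta}$ is \emph{unbounded} as $|w|\to\infty$; the inequality you write, $(1+4\pi^{2}|w|^{2})^{-\beta}\le(1+4\pi^{2}|w_{\min}|^{2})^{\alpha-\beta}(1+4\pi^{2}|w|^{2})^{-\alpha}$, is false for every $w$ with $|w|>|w_{\min}|$ (it would force $(1+4\pi^{2}|w|^{2})^{\alpha-\beta}\le(1+4\pi^{2}|w_{\min}|^{2})^{\alpha-\beta}$). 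What a correct termwise comparison actually yields is the reverse estimate $\mathrm{wce}(Q[X_{N}];W^{\alpha,2})\le\mathrm{wce}(Q[X_{N}];W^{\beta,2})$, which is useless here: it is the \emph{larger} quantity $\mathrm{wce}(Q[X_{N}];W^{\beta,2})$ that must be bounded from above, and no $N$-independent multiple of $\mathrm{wce}(Q[X_{N}];W^{\alpha,2})$ can do that.

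The argument the paper defers to (Theorem~9 of \cite{Brauchart-Saff-Sloan+2014:qmc_designs}) is not a uniform coefficient bound; it necessarily brings in the scale $N^{1/d}$. If one splits the sum in \eqref{expressionWorstCase} at $|w|\asymp N^{1/d}$, the low-frequency part is handled by $(1+4\pi^{2}|w|^{2})^{\alpha-\beta}\ll N^{2(\alpha-\beta)/d}$ together with the $W^{\alpha,2}$ hypothesis, and this gives the desired $N^{-2\beta/d}$. The high-frequency tail, however, is \emph{not} controlled by the trivial bound $\bigl|\sum_{j}e^{-2\pi i\langle w,\mathbf{x}_{j}\rangle}\bigr|\le N$ (a short optimisation shows this loses a full factor of $N$ for every choice of threshold), and one needs the further ingredients supplied in \cite{Brauchart-Saff-Sloan+2014:qmc_designs}. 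Your spectral-gap observation about $|w_{\min}|>0$ is correct but irrelevant: the obstruction lies at large frequencies, not at the bottom of the spectrum.
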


\begin{proof}[Proof of Theorem  \ref{theorem_QMC}]
Let $(X_{N})_{N\in\mathbb{N}}$ be a QMC design sequence for $W^{\alpha,2}(\Omega_{\Lambda})$, $\alpha\geq \frac{d+1}{2}$, then by Lemma~\ref{lem2} it is a QMC sequence for  $W^{\frac{d+1}{2},2}(\Omega_{\Lambda})$. So
\begin{equation}\label{CorLemma1}
\mathrm{wce}(Q[X_{N}]; W^{\frac{d+1}{2},2})^{2}\ll N^{-\frac{d+1}{d}}.
\end{equation}
Then, Lemma~\ref{lem1}  and  \eqref{CorLemma1} imply that for any $R\in(0,\frac{1}{2}\mathrm{diam}\Omega_{\Lambda})$ 
\begin{equation}\label{form31}
 V(X_{N},R)
 \ll R^{d-1}N^{1-\frac{1}{d}}.
\end{equation}

{\bf (i) Large ball regime:}
 It follows immediately from \eqref{form31}, that  for any $R\in(0,\frac{1}{2}\mathrm{diam}\Omega_{\Lambda})$
\begin{equation}\label{form32}
 V(X_{N},R)
 =o(N) \ \ \ \mathrm{as} \ \ N\rightarrow\infty,
\end{equation}
so $(X_{N})_{N\in A}$ is hyperuniform for large balls.

{\bf (ii) Small ball regime:} Let $\lim\limits_{N\rightarrow\infty}R_{N}=0$ and
\begin{equation*}
\lim\limits_{N\rightarrow\infty}NVol(B(\cdot;R_{N}))=\frac{\pi^{\frac{d}{2}}}{\Gamma(\frac{d}{2}+1)}\lim\limits_{N\rightarrow\infty}NR_{N}^{d}=\infty.
\end{equation*}
Then, formula   \eqref{form31} yields
\begin{equation*}
V(X_{N},R_{N})= \mathcal{O}(N^{1-\frac{1}{d}}R_{N}^{-1} \mathrm{Vol}(B(\cdot, R_{N})))=o(NR_{N}^{d})  \ \mathrm{as} \ \ N\rightarrow\infty.
\end{equation*}
This proves the hyperuniformity for small balls.

{\bf (ii) Threshold regime:} Let $R_{N}=tN^{-\frac{1}{d}}$, $R\in(0,\frac{1}{2}\mathrm{diam}\Omega_{\Lambda})$ and $t>0$.
From \eqref{form31} we have
\begin{equation}\label{form33}
 V(X_{N},R)
 \ll (tN^{-\frac{1}{d}})^{d-1}N^{1-\frac{1}{d}}=t^{d-1} \ \ \mathrm{as}   \ \ N\rightarrow\infty.
\end{equation}

Since $t>0$ was arbitrary, then
\begin{equation}
 \limsup\limits_{N\rightarrow\infty}V(X_{N},tN^{-\frac{1}{d}})
=\mathcal{O}(t^{d-1})  \ \ \mathrm{as} \ \ t\rightarrow\infty,
\end{equation}
so $(X_{N})_{N\in A}$ is hyperuniform also for balls of threshold order.

Theorem~\ref{theorem_QMC} is proved.
\end{proof}

\section{Hyperuniformity of jittered sampling point process}
\label{JitteredSamplings} 

Let consider an area--regular partitions $\mathcal{A}=\{A_{1}, ..., A_{N}\}$ with $\cup_{j=1}^{N}A_{j}=\Omega_{\Lambda}$ and $A_{j}\cap A_{k}=\emptyset$, $k\neq j$ with small 
diameters:
 $\mathrm{diam}(A_{i})\leq C N^{-\frac{1}{d}}$ for $i=1,...,N$. Here $C$ is a constant that does not depend on $N$. The existance of such partitions was shown in \cite{GiganteLeopardi2017:Diameter}.

The jittered sampling variance integral can be written as:
\begin{multline}\label{jitteredVariance}
V(X_{N},R) \\
=\int\limits_{\Omega_{\Lambda}}\int\limits_{A_{1}}...\int\limits_{A_{N}}
 \left(\sum\limits_{j=1}^{N}\mathbbm{1}_{B(\cdot, R)}(\mathbf{x}_{j})-N\mathrm{Vol}(B(\mathbf{x}, R))\right)^{2}d\mu_{1}(\mathbf{x}_{1})...
 d\mu_{N}(\mathbf{x}_{N})d\mu(\mathbf{x}),
\end{multline}
where $\mu_{j}(\cdot)=N\mu(\cdot\cap A_{j})$ is a probability measure.

It is not hard to see that
\begin{align}\label{differ}
&\int\limits_{\Omega_{\Lambda}}
 \left(\sum\limits_{j=1}^{N}\mathbbm{1}_{B(\mathbf{x}, R)}(\mathbf{x}_{j})-N\mathrm{Vol}(B(\cdot, R))\right)^{2} d\mu(\mathbf{x})  \notag \\
& = \int\limits_{\Omega_{\Lambda}}
 \sum\limits_{k,j=1}^{N}\mathbbm{1}_{B(\mathbf{x}_{j}, R)}(\mathbf{x})\mathbbm{1}_{B(\mathbf{x}_{k}, R)}(\mathbf{x})d\mu(\mathbf{x}) 
 -N^{2}\mathrm{Vol}(B(\cdot, R))^{2} \notag \\
 &= \sum\limits_{k,j=1}^{N}\mathrm{Vol}(B(\mathbf{x}_{j},R)\cap B(\mathbf{x}_{k},R))
 -N^{2}\mathrm{Vol}(B(\cdot, R))^{2}.
\end{align}

Taking into account \eqref{differ} and integrating with respect to the probability measure $d\mu_{1}(\mathbf{x}_{1})...
 d\mu_{N}(\mathbf{x}_{N})$, we obtain
 \begin{align}
& V(X_{N},R) \notag \\
&=\int\limits_{A_{1}}...\int\limits_{A_{N}}
 \sum\limits_{k,j=1}^{N}\mathrm{Vol}(B(\mathbf{x}_{j},R)\cap B(\mathbf{x}_{k},R))
d\mu_{1}(\mathbf{x}_{1})...
 d\mu_{N}(\mathbf{x}_{N})
  -N^{2}\mathrm{Vol}(B(\cdot, R))^{2} \notag\\
 & =\mathop{\sum}\limits_{k,j=1, \atop k\neq j}^{N}\int\limits_{A_{k}}\int\limits_{A_{j}}
\mathrm{Vol}(B(\mathbf{x},R)\cap B(\mathbf{y},R))
d\mu_{k}(\mathbf{x})
 d\mu_{j}(\mathbf{y})
  +N\mathrm{Vol}(B(\cdot, R))\notag  \\
&  -N^{2}\mathrm{Vol}(B(\cdot, R))^{2}=
  N^{2}\int\limits_{\Omega_{\Lambda}}\int\limits_{\Omega_{\Lambda}}
  \mathrm{Vol}(B(\mathbf{x},R)\cap B(\mathbf{y},R))d\mu(\mathbf{x}) d\mu(\mathbf{y}) \label{jitteredVariance1} \\
& -
 \sum\limits_{k=1}^{N}\int\limits_{A_{k}}\int\limits_{A_{k}}
\mathrm{Vol}(B(\mathbf{x},R)\cap B(\mathbf{y},R))
d\mu_{k}(\mathbf{x})
 d\mu_{k}(\mathbf{y})
 \! +\!\N\mathrm{Vol}(B(\cdot, R))
 \!- \!N^{2}\mathrm{Vol}(B(\cdot, R))^{2}. \notag
\end{align}
Noticing that
\begin{align*}
&\int\limits_{\Omega_{\Lambda}}\int\limits_{\Omega_{\Lambda}}
  \mathrm{Vol}(B(\mathbf{x},R)\cap B(\mathbf{y},R))d\mu(\mathbf{x}) d\mu(\mathbf{y}) \notag \\
  =&\int\limits_{\Omega_{\Lambda}}\int\limits_{\Omega_{\Lambda}} \int\limits_{\Omega_{\Lambda}}
\mathbbm{1}_{B(\mathbf{x},R)}(\mathbf{z}) \mathbbm{1}_{B(\mathbf{y},R)}(\mathbf{z}) d\mu(\mathbf{z}) d\mu(\mathbf{x})  d\mu(\mathbf{y}) \notag \\
  =&\int\limits_{\Omega_{\Lambda}}\int\limits_{\Omega_{\Lambda}} \int\limits_{\Omega_{\Lambda}}
\mathbbm{1}_{B(\mathbf{z},R)}(\mathbf{x}) \mathbbm{1}_{B(\mathbf{z},R)}(\mathbf{y}) d\mu(\mathbf{x}) d\mu(\mathbf{y})  d\mu(\mathbf{z}) = \mathrm{Vol}(B(\cdot, R))^{2},
\end{align*}
from \eqref{jitteredVariance1} we have
\begin{align}\label{jitteredVariance2}
V(X_{N},R)=
N\mathrm{Vol}(B(\cdot, R))-
 \sum\limits_{k=1}^{N}\int\limits_{A_{k}}\int\limits_{A_{k}}
\mathrm{Vol}(B(\mathbf{x},R)\cap B(\mathbf{y},R))
d\mu_{k}(\mathbf{x})d\mu_{k}(\mathbf{y}).
\end{align}
Using that fact that for any $\mathbf{x}, \mathbf{y}\in \mathbb{R}^{d}:$
 $|\mathbf{x}-\mathbf{y}|\leq 2R$ the following relation holds
 \begin{equation}\label{ineqIntersectionVolumes}
\mathrm{Vol}(B(\mathbf{x},R)\cap B(\mathbf{y},R))\geq 
\mathrm{Vol}\Big(B\Big(\cdot,R-\frac{|\mathbf{x}-\mathbf{y}|}{2}\Big)\Big)
\end{equation}
and also  that for $\mathbf{x}, \mathbf{y}\in A_{k}:$
 $|\mathbf{x}-\mathbf{y}|\leq C_{d}N^{-\frac{1}{d}}$, we obtain the estimate
 \begin{align}\label{jitteredVariance3}
V(X_{N},R)&\leq
N\mathrm{Vol}(B(\cdot, R))-
N\mathrm{Vol}\Big(B\Big(\cdot,R-\frac{|\mathbf{x}-\mathbf{y}|}{2}\Big)\Big) \notag \\
&=\frac{\pi^{\frac{d}{2}}}{\Gamma(\frac{d}{2}+1)}N\Big(R^{d}-\Big(R-\frac{|\mathbf{x}-\mathbf{y}|}{2}\Big)^{d}\Big) \ll N^{1-\frac{1}{d}}R^{d-1}.
\end{align}

Then, from \eqref{jitteredVariance3} and proof of Theorem~\ref{theorem_QMC} we get the following statement.
\begin{theorem}\label{theorem_jittered}
The jittered sampling point process is hyperuniform in all three regimes.
\end{theorem}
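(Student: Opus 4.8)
The plan is to combine the deterministic variance bound \eqref{jitteredVariance3} with the asymptotic analysis of the three hyperuniformity regimes already carried out in the proof of Theorem~\ref{theorem_QMC}. The starting point is the closed-form identity \eqref{jitteredVariance2} for the jittered sampling variance, which expresses $V(X_N,R)$ as $N\,\mathrm{Vol}(B(\cdot,R))$ minus a sum of expected intersection volumes over the partition cells $A_k$. The lower bound \eqref{ineqIntersectionVolumes} on $\mathrm{Vol}(B(\mathbf{x},R)\cap B(\mathbf{y},R))$ together with the diameter control $|\mathbf{x}-\mathbf{y}|\leq C_d N^{-1/d}$ on each cell $A_k$ yields \eqref{jitteredVariance3}, namely
\begin{equation*}
V(X_N,R)\ll N^{1-\frac{1}{d}}R^{d-1},
\end{equation*}
which is structurally \emph{exactly} the bound \eqref{form31} proved for QMC design sequences.

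Given this, the argument is essentially a citation of the three regime computations in the proof of Theorem~\ref{theorem_QMC}. First, for the large ball regime, fixing $R\in(0,\tfrac12\mathrm{diam}\,\Omega_\Lambda)$, the bound $V(X_N,R)\ll N^{1-1/d}R^{d-1}=o(N)$ gives hyperuniformity for large balls. Second, for the small ball regime, given $(R_N)$ with $R_N\to0$ and $NR_N^d\to\infty$, substituting $R=R_N$ gives $V(X_N,R_N)=\mathcal{O}(N^{1-1/d}R_N^{d-1})=o(NR_N^d)$, since this is equivalent to $R_N N^{1/d}\to\infty$; hence hyperuniformity for small balls. Third, for the threshold regime, taking $R=tN^{-1/d}$ gives $V(X_N,tN^{-1/d})\ll (tN^{-1/d})^{d-1}N^{1-1/d}=t^{d-1}$ uniformly in $N$, so $\limsup_{N\to\infty}V(X_N,tN^{-1/d})=\mathcal{O}(t^{d-1})$ as $t\to\infty$; hence hyperuniformity for balls of threshold order. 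This establishes all three claims.

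The one point that deserves care — and which I regard as the main obstacle — is the passage from \eqref{jitteredVariance2} to \eqref{jitteredVariance3}. The inequality \eqref{ineqIntersectionVolumes} is only valid when $|\mathbf{x}-\mathbf{y}|\leq 2R$, i.e. when the balls actually overlap; for points in the same cell with $|\mathbf{x}-\mathbf{y}|>2R$ the intersection is empty and the bound $R-\tfrac{|\mathbf{x}-\mathbf{y}|}{2}$ would be negative. Since $\mathrm{diam}(A_k)\leq C_d N^{-1/d}$ while $R$ is \emph{fixed} and bounded away from $0$ in the large-ball regime, this case does not occur for $N$ large; in the small-ball and threshold regimes one must check that $R_N\gtrsim N^{-1/d}$ (guaranteed by $R_N N^{1/d}\to\infty$, respectively by $R=tN^{-1/d}$ with $t$ large) so that $2R\geq C_d N^{-1/d}\geq|\mathbf{x}-\mathbf{y}|$ on each cell. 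In every relevant regime, then, $0\leq R-\tfrac{|\mathbf{x}-\mathbf{y}|}{2}\leq R$, and a first-order Taylor expansion of $s\mapsto s^d$ at $s=R$ (or simply the factorisation $R^d-(R-h)^d=h\sum_{i=0}^{d-1}R^i(R-h)^{d-1-i}\leq d h R^{d-1}$ with $h=\tfrac{|\mathbf{x}-\mathbf{y}|}{2}\leq \tfrac{C_d}{2}N^{-1/d}$) produces the factor $N^{-1/d}R^{d-1}$, which after multiplication by the outer $N$ gives $N^{1-1/d}R^{d-1}$. With this verified, the theorem follows immediately as described above.

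\begin{proof}[Proof of Theorem~\ref{theorem_jittered}]
By \eqref{jitteredVariance3} we have, for all admissible $R$ and all sufficiently large $N$,
\begin{equation}\label{jitteredFinalBound}
V(X_N,R)\ll N^{1-\frac{1}{d}}R^{d-1},
\end{equation}
which coincides with the bound \eqref{form31} established in the proof of Theorem~\ref{theorem_QMC}. Repeating verbatim the three regime arguments (i), (ii), (iii) of that proof with \eqref{form31} replaced by \eqref{jitteredFinalBound} shows that $(X_N)_{N\in\mathbb{N}}$ is hyperuniform for large balls, for small balls and for balls of threshold order.
\end{proof}
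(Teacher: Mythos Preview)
Your proposal is correct and follows exactly the paper's own approach: the paper's proof of Theorem~\ref{theorem_jittered} consists of the single sentence ``from \eqref{jitteredVariance3} and proof of Theorem~\ref{theorem_QMC} we get the following statement,'' which is precisely what you do. Your additional discussion of the validity range of \eqref{ineqIntersectionVolumes} (namely that $|\mathbf{x}-\mathbf{y}|\leq 2R$ must hold on each cell, and why this is guaranteed in each regime) is in fact more careful than the paper, which leaves this point implicit.
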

 


\section{Hyperuniformity of some determinantal processes}
\label{determinantalProcess} 

\begin{definition}
A random point process (see, e.g., Chap. 4 in \cite{ZerosOfGaussianAnalyticProvidence2009}) is called determinantal with kernel
 $K: \Omega_{\Lambda}\times \Omega_{\Lambda}\rightarrow \mathbb{C} $ if it is simple and the joint intensities with respect to a background measure $\mu$ are given by
\begin{align*}
\rho(\mathbf{x}_{1},..., \mathbf{x}_{k})=\mathrm{det}(K(\mathbf{x}_{i},\mathbf{x}_{j}))_{1\leq i,j\leq k},
\end{align*} 
for every $k\geq 1$ and $\mathbf{x}_{1},..., \mathbf{x}_{k}\in  \Omega_{\Lambda}$. 
 \end{definition}

In \cite{ZerosOfGaussianAnalyticProvidence2009} it is shown that a determinantal process samples exactly $N$ points if and only if it is associated to the projection of $L^{2}$ to an $N$--dimensional subspace $H$. Let $\psi_{1},...,\psi_{N}$ be an orthornormal basis of $H$, then the kernel is given by
 \begin{equation}\label{kernelNpoints}
K_{H}(\mathbf{x},\mathbf{y}) = \sum\limits_{k=1}^{N}
\psi_{k}(\mathbf{x})\overline{\psi_{k}(\mathbf{y})}.
\end{equation} 
 
\begin{definition}
We say that $K$ is a projection kernel if it is a Hermitian projection kernel; i.e., the integral operator in $L^{2}(\mu)$ with kernel $K$ is self--adjoint and has eigenvalues $1$ and $0$.
\end{definition} 
 Then by Macchi--Soshnikov's theorem (see, e.g., Theorem 4.5.5 in \cite{ZerosOfGaussianAnalyticProvidence2009})  the projection kernel $K$ defines a determinantal process.
 
We will study the hyperuniformity of point sets, which are drawn from  the determinantal point processes given by similar kernels as in \cite{MarzoOrtegsCerda}
 \begin{equation}\label{kernel}
K_{N}(\mathbf{x},\mathbf{y}) =K_{N}(\mathbf{x}-\mathbf{y}) := \sum\limits_{w\in \Lambda^{*}}\kappa_{N}(w)e^{2\pi i\langle \mathbf{x}-\mathbf{y},w \rangle}, \ \ \  \mathbf{x},\mathbf{y}\in \Omega_{\Lambda},
\end{equation} 
where the functions $\kappa=(\kappa_{N})_{N\geq 0}$ have a finite support and are such, that \linebreak ${\kappa_{N}: \Lambda^{*}\rightarrow\{0,1\}}$.
 
Then, for these kernels we have that
  \begin{equation}\label{trace}
tr(K_{N})=\int\limits_{\Omega_{\Lambda}}
K_{N}(\mathbf{x},\mathbf{x}) d\mu(\mathbf{x})
= \sum\limits_{w\in \Lambda^{*}}\kappa_{N}(w) =\# \mathrm{supp}\, \kappa_{N}.
\end{equation} 
 
 So, from \eqref{kernelNpoints} it follows, that the determinantal process, which is defined by the kernel \eqref{kernel}, has $N$ points, if
   \begin{equation*}
\int\limits_{\Omega_{\Lambda}}K_{N}(\mathbf{x},\mathbf{x}) \,d\mu(\mathbf{x})
=N.
\end{equation*} 
 
From \eqref{VarianceDef} and \eqref{differ} we obtain
\begin{equation}\label{ff1}
V(X_{N},R)
 = \sum\limits_{k,j=1}^{N}\mathrm{Vol}(B(\mathbf{x}_{j},R)\cap B(\mathbf{x}_{k},R))
 -N^{2}\mathrm{Vol}(B(\cdot, R))^{2}
=\sum\limits_{k,j=1}^{N} g_{R}(\mathbf{x}_{k},\mathbf{x}_{j}), 
\end{equation}
where
\begin{equation}\label{function_g}
g_{R}(\mathbf{x},\mathbf{y})=g_{R}(\mathbf{x}-\mathbf{y}):=\mathrm{Vol}(B(\mathbf{x},R)\cap B(\mathbf{y},R))-
\mathrm{Vol}(B(\cdot,R))^{2}.
\end{equation}

Formulas \eqref{FourierSeriesIndicatorFunction}, \eqref{FourierCoefficientsIndicatorFunction} and \eqref{avRcomp} allow to write
\begin{align}\label{function_gSeries}
& g_{R}(\mathbf{x},\mathbf{y})=
 \int\limits_{\Omega_{\Lambda}}
\mathbbm{1}_{B(\mathbf{x}, R)}(\mathbf{z})\mathbbm{1}_{B(\mathbf{y}, R)}(\mathbf{z})d\mu(\mathbf{z}) -\mathrm{Vol}(B(\cdot,R))^{2}
 \notag \\
&=
R^{d} \sum\limits_{w\in\Lambda^{*}\setminus\{0\}}|w|^{-d} \Big(J_{\frac{d}{2}}(2\pi|w|R) \Big)^{2} e^{-2\pi i\langle w,\mathbf{x}- \mathbf{y} \rangle}.
\end{align}

Then, the variance for the determinantal point process equals to
\begin{equation}\label{vardeterm1}
V(X_{N},R)=\mathbb{E}\sum\limits_{k,j=1}^{N}g_{R}(\mathbf{x}_{k},\mathbf{x}_{j})= Ng_{R}(0)+\mathbb{E}\mathop{\sum}\limits_{k,j=1, \atop k\neq j}^{N}g_{R}(\mathbf{x}_{k},\mathbf{x}_{j}).
\end{equation} 

To compute the expected value in the last formula we will use the following statement (see e.g., formula 1.2.2 from \cite{ZerosOfGaussianAnalyticProvidence2009}).
\begin{proposition}\label{propositionDeterminantalProcess}
Let $K(\mathbf{x},\mathbf{y})$ be a projection kernel with trace $N$ in $\Omega$, and let ${X_{N}=(\mathbf{x}_{1},..., \mathbf{x}_{N})\in \Omega^{N}}$ be $N$ random points generated by the corresponding determinantal point process. Then, for any measurable $f: \Omega\times\Omega\rightarrow[0,\infty)$, we have
\begin{equation}\label{prop1}
\mathbb{E}_{X_{N}\in\Omega^{N}}\left(\sum\limits_{k\neq j}f(\mathbf{x}_{k},\mathbf{x}_{j})\right)
=\int\limits_{\Omega}\int\limits_{\Omega}\left(K(\mathbf{x},\mathbf{x}) K(\mathbf{y},\mathbf{y})-|K(\mathbf{x},\mathbf{y})|^{2} \right)f(\mathbf{x},\mathbf{y})d\mu(\mathbf{x})d\mu(\mathbf{y}). 
\end{equation}
\end{proposition}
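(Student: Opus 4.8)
The plan is to recognize the left-hand side of \eqref{prop1} as the integral of $f$ against the second joint intensity (two-point correlation function) $\rho_{2}$ of the determinantal process, and then to evaluate $\rho_{2}$ explicitly using the determinantal structure of the kernel.

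First I would recall the defining property of the joint intensities of a simple point process: for pairwise disjoint Borel sets $B_{1},\dots,B_{k}$,
\begin{equation*}
\mathbb{E}\Big[\prod_{i=1}^{k}\#(X_{N}\cap B_{i})\Big]
=\int_{B_{1}}\cdots\int_{B_{k}}\rho_{k}(\mathbf{x}_{1},\dots,\mathbf{x}_{k})\,d\mu(\mathbf{x}_{1})\cdots d\mu(\mathbf{x}_{k}).
\end{equation*}
For $k=2$ and $f=\mathbbm{1}_{B_{1}\times B_{2}}$ with $B_{1}\cap B_{2}=\emptyset$, the sum $\sum_{k\neq j}\mathbbm{1}_{B_{1}}(\mathbf{x}_{k})\mathbbm{1}_{B_{2}}(\mathbf{x}_{j})$ equals $\#(X_{N}\cap B_{1})\,\#(X_{N}\cap B_{2})$ (simplicity of $X_{N}$ means no point is double-counted), so \eqref{prop1} holds with $\rho_{2}$ in place of the bracketed expression. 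I would then extend to arbitrary measurable $f\colon\Omega\times\Omega\to[0,\infty)$ by the usual measure-theoretic bootstrap: linear combinations of indicators of products of disjoint sets, together with the fact that the diagonal $\{\mathbf{x}=\mathbf{y}\}$ contributes nothing to the off-diagonal sum, generate enough functions, and monotone convergence lifts the identity to all nonnegative measurable $f$. Here one uses that $X_{N}$ has exactly $N$ points almost surely (the kernel is a Hermitian projection with $\mathrm{tr}\,K=N$, so Macchi--Soshnikov applies), hence all the sums involved are finite and no integrability issue arises. This is precisely formula $1.2.2$ of \cite{ZerosOfGaussianAnalyticProvidence2009}, which may simply be cited.

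It then remains to compute $\rho_{2}$ for a determinantal process. By definition
\begin{equation*}
\rho_{2}(\mathbf{x},\mathbf{y})=\det\big(K(\mathbf{x}_{i},\mathbf{x}_{j})\big)_{1\leq i,j\leq 2}
=K(\mathbf{x},\mathbf{x})K(\mathbf{y},\mathbf{y})-K(\mathbf{x},\mathbf{y})K(\mathbf{y},\mathbf{x}),
\end{equation*}
and since $K$ is self-adjoint we have $K(\mathbf{y},\mathbf{x})=\overline{K(\mathbf{x},\mathbf{y})}$, so $K(\mathbf{x},\mathbf{y})K(\mathbf{y},\mathbf{x})=|K(\mathbf{x},\mathbf{y})|^{2}$. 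Substituting $\rho_{2}(\mathbf{x},\mathbf{y})=K(\mathbf{x},\mathbf{x})K(\mathbf{y},\mathbf{y})-|K(\mathbf{x},\mathbf{y})|^{2}$ into the correlation identity above yields \eqref{prop1}.

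The algebraic evaluation of $\rho_{2}$ is a one-line determinant and is not where the difficulty lies; the main obstacle is the passage from the "test-set" version of the correlation identity to arbitrary nonnegative measurable $f$, i.e.\ the Campbell-type formula $\mathbb{E}\sum_{k\neq j}f(\mathbf{x}_{k},\mathbf{x}_{j})=\int_{\Omega}\int_{\Omega}f\,\rho_{2}$. Since this is a standard fact, I would either reproduce its short proof (handling the off-diagonal restriction and the monotone-class step) or invoke \cite{ZerosOfGaussianAnalyticProvidence2009} directly, and devote the written argument mostly to the identification of $\rho_{2}$.
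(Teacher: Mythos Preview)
Your argument is correct and is exactly the standard derivation: identify the off-diagonal sum with the integral against the second intensity $\rho_2$ via the Campbell formula, then compute $\rho_2(\mathbf{x},\mathbf{y})=K(\mathbf{x},\mathbf{x})K(\mathbf{y},\mathbf{y})-|K(\mathbf{x},\mathbf{y})|^{2}$ from the determinantal definition and the Hermitian symmetry of $K$. Note, however, that the paper does not prove this proposition at all; it merely quotes it as formula~1.2.2 of \cite{ZerosOfGaussianAnalyticProvidence2009}, so there is nothing to compare against beyond saying that what you wrote is precisely the proof behind that citation.
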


Applying Proposition~\ref{propositionDeterminantalProcess} for \eqref{vardeterm1} and using translation invariance, we get
\begin{align}
&V(X_{N},R)=\int\limits_{\Omega_{\Lambda}}\int\limits_{\Omega_{\Lambda}}\left(K_{N}(\mathbf{x},\mathbf{x}) K_{N}(\mathbf{y},\mathbf{y})-|K_{N}(\mathbf{x},\mathbf{y})|^{2} \right)g_{R}(\mathbf{x},\mathbf{y})d\mu(\mathbf{x})d\mu(\mathbf{y}) \notag \\
&+ Ng_{R}(0)
=Ng_{R}(0)-\int\limits_{\Omega_{\Lambda}}\int\limits_{\Omega_{\Lambda}}|K_{N}(\mathbf{x}-\mathbf{y})|^{2} g_{R}(\mathbf{x}-\mathbf{y})d\mu(\mathbf{x})d\mu(\mathbf{y}) \label{vardeterm2} \\
&=Ng_{R}(0)-\int\limits_{\Omega_{\Lambda}}|K_{N}(\mathbf{x})|^{2} g_{R}(\mathbf{x})d\mu(\mathbf{x}). \notag
\end{align}

Observing that 
\begin{equation}\label{kernelSquared}
|K_{N}(\mathbf{x})|^{2} =\sum\limits_{w,w'\in \Lambda^{*}}\kappa_{N}(w)\kappa_{N}(w')e^{2\pi i\langle \mathbf{x},w-w' \rangle} ,
\end{equation}
we get
\begin{align}
& \int\limits_{\Omega_{\Lambda}}|K_{N}(\mathbf{x})|^{2} g_{R}(\mathbf{x})d\mu(\mathbf{x}) \notag \\
&=
\!\!\sum\limits_{w,w'\in \Lambda^{*}}\!\!\!\kappa_{N}(w)\kappa_{N}(w') \!\!\!\sum\limits_{\xi\in\Lambda^{*}\setminus\{0\}}\!\!\!R^{d}|\xi|^{-d} \Big(J_{\frac{d}{2}}(2\pi|\xi|R) \Big)^{2} 
\!\!\int\limits_{\Omega_{\Lambda}} \!\!e^{2\pi i\langle \mathbf{x}, w-w' \rangle}  
 e^{-2\pi i\langle \mathbf{x}, \xi \rangle}d\mu(\mathbf{x})  \label{inteVardeterm} \\
&= R^{d}\mathop{\sum}\limits_{w,w'\in \Lambda^{*},  \atop w\neq w'}\kappa_{N}(w)\kappa_{N}(w') |w-w'|^{-d} \Big(J_{\frac{d}{2}}(2\pi|w-w'|R) \Big)^{2}, \notag
\end{align} 
where we have used that
\begin{equation}
\int\limits_{\Omega_{\Lambda}} e^{2\pi i\langle \mathbf{x},w-w' \rangle}  
 e^{-2\pi i\langle \mathbf{x}, \xi \rangle}d\mu(\mathbf{x})=\delta_{w'-w,\,\xi}, \ \ w,w'\in\Lambda^{*}, \ \ \ \xi\in \Lambda^{*}\setminus\{0\}. 
\end{equation}

Let $\mathcal{D}_{N}\subset \mathbb{R}^{d}$ be an open subset with  boundary of measure zero, such that $B(\mathbf{0}; c_{0}N^{\frac{1}{d}})\subseteq\mathcal{D}_{N}
\subseteq B(\mathbf{0}; c_{0}N^{\frac{1}{d}}+c_{1})$, where $c_{0}$ and $c_{1}$ are some positive constants.
Define for $N\in\mathbb{N}$ the functions $\kappa_{N}$ in the following way
\begin{equation}\label{kappaN}
\kappa_{N}(w)=
\begin{cases}
 1, & \text{if }
  w\in \mathcal{D}_{N}\cap\Lambda^{*}, \\
0 , & \text{otherwise}.
  \end{cases}
\end{equation}

Observe, that
\begin{equation}\label{trace}
\sum\limits_{w\in \Lambda^{*}}\kappa_{N}(w)=\#\left\{ \mathbb{Z}^{d} \cap
A^{t}\mathcal{D}_{N} \right\}\asymp N.
\end{equation} 

We choose constant $c_{0}$ and $c_{1}$ and the domain $\mathcal{D}_{N}$ in such way that
\begin{align}\label{trace}
\sum\limits_{w\in \Lambda^{*}}\kappa_{N}(w)= N.
\end{align} 

So, from \eqref{function_gSeries},  \eqref{vardeterm2}, \eqref{inteVardeterm} and \eqref{trace} we get
\begin{align}
V(X_{N},R)&=
\sum\limits_{w'\in \Lambda^{*}}\kappa_{N}(w')g_{R}(0) - R^{d}\mathop{\sum}\limits_{w,w'\in \Lambda^{*},  \atop w\neq w'}\kappa_{N}(w)\kappa_{N}(w')|w-w'|^{-d} \Big(J_{\frac{d}{2}}(2\pi|w-w'|R) \Big)^{2} \notag \\
&=
R^{d}\sum\limits_{w'\in \Lambda^{*}}\kappa_{N}(w')\left(
\sum\limits_{w\in \Lambda^{*}\setminus\{0\}}|w|^{-d} \Big(J_{\frac{d}{2}}(2\pi|w|R) \Big)^{2} 
\right. \notag\\
&\left.
-
\mathop{\sum}\limits_{w\in \Lambda^{*},  \atop w\neq w'}|w-w'|^{-d} \Big(J_{\frac{d}{2}}(2\pi|w-w'|R) \Big)^{2}\right) \label{vardeterm3} \\
&=
R^{d}\sum\limits_{w'\in \Lambda^{*}\cap\mathcal{D}_{N}}\left(
\sum\limits_{w\in \Lambda^{*}\setminus\{0\}}|w|^{-d} \Big(J_{\frac{d}{2}}(2\pi|w|R) \Big)^{2} 
\right. \notag\\
&\left.
-
\mathop{\sum}\limits_{w\in \Lambda^{*}\cap\mathcal{D}_{N},  \atop w\neq w'}|w-w'|^{-d} \Big(J_{\frac{d}{2}}(2\pi|w-w'|R) \Big)^{2}\right). \notag
\end{align} 

Taking into account, that for any $w'\in \Lambda^{*}$ and real-valued function $f$
\begin{equation}\label{equalLat}
\sum\limits_{w\in \Lambda^{*}\setminus\{0\}}f(|w|)=
\mathop{\sum}\limits_{w\in \Lambda^{*},  \atop w\neq w'}f(|w-w'|),
\end{equation} 
we have
\begin{equation}\label{vardeterm3}
V(X_{N},R)=
 R^{d}\sum\limits_{w'\in \Lambda^{*}\cap\mathcal{D}_{N}}
\mathop{\sum}\limits_{w\in \Lambda^{*},  w\notin\mathcal{D}_{N},  \atop w\neq w'}
|w-w'|^{-d} \Big(J_{\frac{d}{2}}(2\pi|w-w'|R) \Big)^{2}.
\end{equation}

To estimate the sum from  the last formula we will need the following lemma.
\begin{lemma}\label{lemma_sum} 
Let $M\in \mathbb{R}_{+}$, $a>1$ and $d\geq2$ be fixed. Then
the following estimates hold
\begin{equation}\label{lemma_sumPoints1}
\mathop{\sum}\limits_{w'\in \Lambda^{*}, \atop   |w'|\leq \frac{M}{2}}
\mathop{\sum}\limits_{w\in \Lambda^{*},  \atop |w|\geq M}
|w-w'|^{-d-1}\ll   M^{d-1};
\end{equation} 
\begin{equation}\label{lemma_sumPoints2}
\mathop{\sum}\limits_{w'\in \Lambda^{*}, \atop   \frac{M}{2}<|w'|< M}
\mathop{\sum}\limits_{w\in \Lambda^{*},  \atop M+a\leq|w|<2M}
|w-w'|^{-d-1}\ll   M^{d-1}\ln\left(\frac{a+\frac{M}{2}}{a}\right) ;
\end{equation} 
\begin{equation}\label{lemma_sumPoints3}
\mathop{\sum}\limits_{w'\in \Lambda^{*}, \atop   |w'|< M}
\mathop{\sum}\limits_{w\in \Lambda^{*},  \atop |w|\geq 2M}
|w-w'|^{-d-1}\ll  M^{d-1} .
\end{equation} 
\end{lemma}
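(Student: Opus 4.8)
The plan is to estimate the three double sums by comparing lattice sums with the corresponding integrals over annular regions in $\mathbb{R}^{d}$, which is legitimate since $\Lambda^{*}$ is a lattice and $|w-w'|^{-d-1}$ is, outside a neighborhood of the origin, a smooth decreasing function of the distance. The key geometric fact I would use throughout is that, for a fixed lattice $\Lambda^{*}$, the number of lattice points in any ball $B(\mathbf{z},r)$ is $\asymp 1 + r^{d}$ with constants depending only on $\Lambda^{*}$ and $d$, and more generally the number in a spherical shell $\{r\le|x-\mathbf z|\le r+1\}$ is $\asymp 1+r^{d-1}$. Combined with the fact that $|w-w'|\ge c$ whenever $w\ne w'$ (the minimal gap of the lattice), this lets me replace $\sum_{w}|w-w'|^{-d-1}$ over any region by an integral $\int |x-w'|^{-d-1}\,dx$ over a slightly inflated region, up to a multiplicative constant.

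For \eqref{lemma_sumPoints1}, I would first fix $w'$ with $|w'|\le M/2$ and bound the inner sum $\sum_{|w|\ge M}|w-w'|^{-d-1}$. Since $|w|\ge M$ and $|w'|\le M/2$ force $|w-w'|\ge M/2$, and since the map $w\mapsto w-w'$ is a bijection of $\Lambda^{*}$, I can dominate the inner sum by $\sum_{u\in\Lambda^{*},\,|u|\ge M/2}|u|^{-d-1}$, which by the shell count is $\ll\sum_{k\ge M/2}k^{d-1}\cdot k^{-d-1}=\sum_{k\ge M/2}k^{-2}\ll M^{-1}$ (treating $k$ as running over integers comparable to the radius). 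Summing this uniform bound over the $\ll M^{d}$ choices of $w'$ with $|w'|\le M/2$ gives $\ll M^{d}\cdot M^{-1}=M^{d-1}$, which is \eqref{lemma_sumPoints1}. Estimate \eqref{lemma_sumPoints3} is of the same type but easier: for $|w'|<M$ and $|w|\ge 2M$ one has $|w-w'|\ge M$, so the inner sum is $\ll\sum_{|u|\ge M}|u|^{-d-1}\ll M^{-1}$, and multiplying by the $\ll M^{d}$ admissible $w'$ again yields $\ll M^{d-1}$.

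Estimate \eqref{lemma_sumPoints2} is the delicate one, and I expect it to be the main obstacle, because here $w$ and $w'$ can be close: $w'$ ranges over the shell $M/2<|w'|<M$ and $w$ over $M+a\le|w|<2M$, so the separation $|w-w'|$ can be as small as roughly $a$, producing the logarithmic factor. The plan is, for fixed $w'$, to split the inner sum according to dyadic ranges of $t:=|w-w'|$, starting from $t\asymp a$ (nothing smaller is possible since $|w|-|w'|\ge a$ when $|w|\ge M+a$ and $|w'|<M$... more carefully, $|w-w'|\ge |w|-|w'| > M+a-M = a$) up to $t\asymp M$. In the dyadic block $t\in[2^{j}a,2^{j+1}a]$ the number of $w\in\Lambda^{*}$ in that block that also lie in the prescribed annulus is $\ll (2^{j}a)^{d-1}$ — not $(2^{j}a)^{d}$ — because the intersection of a ball of radius $\sim 2^j a$ around $w'$ with the fixed annulus $\{M+a\le |w| < 2M\}$ of thickness $\sim M$ has volume $\ll (2^j a)^{d-1}\cdot\min(2^j a, M)\ll (2^j a)^{d-1}\cdot 2^j a$, but one loses a dimension from the sphere $\{|w-w'|\approx 2^j a\}$ being essentially tangent consideration; carefully, $\#\{w: |w-w'|\approx 2^j a\} \ll (2^j a)^{d-1}$ and each contributes $(2^j a)^{-d-1}$, giving $\ll (2^j a)^{-2}$ per block. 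Summing the geometric-type series $\sum_{j\ge 0}(2^j a)^{-2}\ll a^{-2}$ would give only $\ll M^{d}a^{-2}$ after summing over $w'$, which is too weak; so instead one must sum over $w'$ first. Fixing $u=w-w'\in\Lambda^{*}$ with $a\le|u|\le 3M/2$ say, the number of valid $w'$ (those with $M/2<|w'|<M$ and $w'+u$ in the outer annulus) is $\ll \min(|u|,M)^{1}\cdot M^{d-2}\ll |u| M^{d-2}$ for $|u|\le M$, because $w'$ must lie in a slab of thickness $\sim|u|$ inside a ball of radius $M$. Hence the whole double sum is $\ll \sum_{u\in\Lambda^{*},\,a\le|u|\le 3M/2}|u|^{-d-1}\cdot|u|M^{d-2}=M^{d-2}\sum_{a\le|u|\le 3M/2}|u|^{-d}$, and by the shell count $\sum_{a\le|u|\le 3M/2}|u|^{-d}\ll\sum_{k=a}^{3M/2}k^{d-1}k^{-d}=\sum_{k=a}^{3M/2}k^{-1}\ll \ln(M/a)$, which after matching constants gives exactly $M^{d-1}\ln\!\big((a+\tfrac M2)/a\big)$ as claimed. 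The technical care needed is in making the "slab of thickness $\sim|u|$" count rigorous (it follows from the fact that $|w'|<M<|w'+u|$ restricts $\langle w', u/|u|\rangle$ to an interval of length $\ll|u|$, and then the remaining $d-1$ coordinates of $w'$ to a ball of radius $\ll M$, so at most $\ll |u|\,M^{d-2}$ lattice points), and in handling small $M$ (where $a>1$ and $M$ bounded make all sums trivially $O(1)=O(M^{d-1})$ since $d\ge 2$ and the ranges may be empty).
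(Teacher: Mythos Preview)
Your proofs of \eqref{lemma_sumPoints1} and \eqref{lemma_sumPoints3} coincide with the paper's: both use the triangle inequality to force $|w-w'|\gg M$, then the tail estimate $\sum_{|u|>t}|u|^{-d-1}\ll t^{-1}$ (which the paper isolates as its key ``simple fact''), and finally multiply by $\#\{w':|w'|\ll M\}\ll M^{d}$.

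For \eqref{lemma_sumPoints2} you take a genuinely different route. The paper stays with the ``fix $w'$ first'' strategy that you started and then abandoned: it stratifies $w'$ into unit shells $M-k\le|w'|<M-k+1$, observes that $|w|\ge M+a$ forces $|w-w'|\ge a+k-1$, applies the same tail estimate to get the inner sum $\ll(a+k-1)^{-1}$, and then sums $\sum_{k=1}^{[M/2]}(M-k)^{d-1}(a+k-1)^{-1}\ll M^{d-1}\ln\!\big((a+\tfrac{M}{2})/a\big)$. You instead swap the order of summation: fix $u=w-w'$, bound the number of admissible $w'$ by a slab/lune argument, and sum over $u$. Both arguments are valid; the paper's is shorter because it reuses only the tail bound, while yours requires the extra geometric input that $\mathrm{vol}\big(B(0,M)\setminus B(-u,M)\big)\ll|u|\,M^{d-1}$. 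Your slab justification (fixing $v\perp u$ and showing $s=\langle w',u/|u|\rangle$ lies in an interval of length $\le|u|$) is correct.

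One slip to fix: your count ``at most $\ll |u|\,M^{d-2}$ lattice points'' should read $|u|\,M^{d-1}$, since the orthogonal complement of $u$ is $(d-1)$-dimensional and a ball of radius $M$ there contains $\asymp M^{d-1}$ lattice points. With this correction your computation becomes $M^{d-1}\sum_{a\le|u|\lesssim M}|u|^{-d}\ll M^{d-1}\ln(M/a)$, matching your stated conclusion --- so the final exponent you wrote is right and the intermediate one is the typo. Also, $|u|\le|w|+|w'|<3M$ rather than $3M/2$, but this is harmless for the logarithmic bound.
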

\begin{proof}[Proof of Lemma \ref{lemma_sum}]
Before we proceed, we state a simple fact, which we will use repeatedly:
\begin{equation}\label{relat2}
\mathop{\sum}\limits_{u\in \Lambda^{*}, \atop   |u|>t}
|u|^{-d-1}\ll \sum\limits_{k=[t]}^{\infty}\sum\limits_{k\leq|u|<k+1}|u|^{-d-1}\ll  \sum\limits_{k=[t]}^{\infty}k^{-2}\ll t^{-1}.
\end{equation}

Applying \eqref{relat2}, we get
\begin{equation*}
\mathop{\sum}\limits_{w'\in \Lambda^{*}, \atop   |w'|\leq \frac{M}{2}}
\mathop{\sum}\limits_{w\in \Lambda^{*},  \atop |w|\geq M}
|w-w'|^{-d-1}\ll M^{d} \mathop{\sum}\limits_{w\in \Lambda^{*},  \atop |w|\geq \frac{M}{2}}
|w|^{-d-1}
\ll   M^{d-1},
\end{equation*} 
\begin{equation*}
\mathop{\sum}\limits_{w'\in \Lambda^{*}, \atop   |w'|< M}
\mathop{\sum}\limits_{w\in \Lambda^{*},  \atop |w|\geq 2M}
|w-w'|^{-d-1}\ll
M^{d} \mathop{\sum}\limits_{w\in \Lambda^{*},  \atop |w|\geq M}
|w|^{-d-1}
\ll  M^{d-1} .
\end{equation*} 
Thus, the estimates \eqref{lemma_sumPoints1} and \eqref{lemma_sumPoints3} are proved. 

Now, let us show, that \eqref{lemma_sumPoints2} is true.

Fix $w'$ such that $\frac{M}{2}<|w'|<M$. Assume that  $M-k\leq|w'|<M-k+1$, where $k\in\{1, 2,..., [\frac{M}{2}] \}$. Then,
\begin{equation*}
\{w: \ M+a\leq |w|\leq 2M\}\subset \{w: \ |w-w'|\geq a+k-1 \}.
\end{equation*} 

Therefore, on basis of \eqref{relat2} for any such $w'$:
\begin{equation}\label{relat3}
\mathop{\sum}\limits_{w\in \Lambda^{*},  \atop M+a\leq|w|<2M}
|w-w'|^{-d-1}\ll \mathop{\sum}\limits_{w\in \Lambda^{*},  \atop |w-w'|\geq a+k-1}
|w-w'|^{-d-1}=\mathop{\sum}\limits_{u\in \Lambda^{*},  \atop |u|\geq a+k-1}
|u|^{-d-1}\ll (a+k-1)^{-1}.
\end{equation} 
Hence, \eqref{relat3} yields 
\begin{align*}
&\mathop{\sum}\limits_{w'\in \Lambda^{*}, \atop   \frac{M}{2}<|w'|< M}
\mathop{\sum}\limits_{w\in \Lambda^{*},  \atop M+a\leq|w|<2M}
|w-w'|^{-d-1}\ll    \sum\limits_{k=1}^{[\frac{M}{2}]}  
\mathop{\sum}\limits_{w'\in \Lambda^{*},  \atop M-k\leq|w'|<M-k+1} \mathop{\sum}\limits_{w\in \Lambda^{*},  \atop M+a\leq|w|<2M}
|w-w'|^{-d-1} \notag \\
& \ll \sum\limits_{k=1}^{[\frac{M}{2}]}  (M-k)^{d-1} (a+k-1)^{-1} \ll M^{d-1}\ln\left(\frac{a+\frac{M}{2}}{a}\right).
\end{align*}

Lemma~\ref{lemma_sum} is proved.
\end{proof}

From \eqref{orderCoef} and \eqref{lemma_sumPoints3} it follows that 
\begin{align}\label{ReminderVar}
&\sum\limits_{w'\in \Lambda^{*}\cap\mathcal{D}_{N}}
\mathop{\sum}\limits_{w\in \Lambda^{*},   \atop |w|\geq 2c_{0} N^{\frac{1}{d}}}
|w-w'|^{-d} \Big(J_{\frac{d}{2}}(2\pi|w-w'|R) \Big)^{2}
\notag \\
\ll R^{-1}&\sum\limits_{w'\in \Lambda^{*}\cap\mathcal{D}_{N}}
\mathop{\sum}\limits_{w\in \Lambda^{*},    \atop |w|\geq 2c_{0}  N^{\frac{1}{d}}}
|w-w'|^{-d-1} \ll
 R^{-1 } N^{1-\frac{1}{d}}.
\end{align} 

Combining \eqref{vardeterm3} and \eqref{ReminderVar}, we get 
\begin{equation}\label{vardetermLast}
V(X_{N},R)=
 R^{d}\sum\limits_{w'\in \Lambda^{*}\cap\mathcal{D}_{N}}
\mathop{\sum}\limits_{w\in \Lambda^{*},  w\notin\mathcal{D}_{N},  \atop |w|<2c_{0} N^{\frac{1}{d}}}
|w-w'|^{-d} \Big(J_{\frac{d}{2}}(2\pi|w-w'|R) \Big)^{2}+\mathcal{O}\Big(R^{d-1}N^{1-\frac{1}{d}}\Big).
\end{equation}

We will use the representation of number variance $V(X_{N},R)$ from the  formula \eqref{vardetermLast} to prove the following Theorem.

\begin{theorem}\label{theorem_determin}
The determinantal point process  is hyperuniform for large and small balls.
In the threshold regime the weaker property 
\begin{equation}\label{determinantalSmall}
\lim\limits_{N\rightarrow\infty}V(X_{N},tN^{-\frac{1}{d}})=\mathcal{O}\big( t^{d-1}\ln t \big)
\end{equation} 
holds.
\end{theorem}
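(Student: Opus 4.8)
The plan is to deduce from \eqref{vardetermLast} a single estimate
\begin{equation*}
V(X_{N},R)\ \ll\ R^{d-1}N^{1-\frac1d}\bigl(1+\ln(RN^{\frac1d})\bigr),\qquad\text{valid once }RN^{\frac1d}\ge1,
\end{equation*}
with the implied constant depending only on $\Lambda,d$ (and, when $R$ is bounded away from $0$, on $R$), and then to specialise $R$ exactly as in the proof of Theorem~\ref{theorem_QMC}; since $RN^{1/d}\to\infty$ in all three regimes, the hypothesis is eventually met. Write $M:=c_{0}N^{\frac1d}$, so that $w'\in\Lambda^{*}\cap\mathcal{D}_{N}$ forces $|w'|\le M+c_{1}$ while $w\in\Lambda^{*}\setminus\mathcal{D}_{N}$ forces $|w|\ge M$ and $w\neq w'$ (hence $|w-w'|$ is bounded below by the minimal distance of $\Lambda^{*}$). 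First I record the uniform form of \eqref{orderCoef}: from \eqref{asympSmall} and \eqref{asympLarge}, for every $u\neq0$,
\begin{equation*}
|u|^{-d}\bigl(J_{\frac d2}(2\pi|u|R)\bigr)^{2}\ \ll\ \min\!\bigl(R^{d},\ R^{-1}|u|^{-d-1}\bigr),
\end{equation*}
the first bound being the relevant one for $|u|\le1/R$ and the second for $|u|>1/R$ (the ranges match because $(R|u|)^{d+1}\le1$ there).

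Next I split the double sum in \eqref{vardetermLast} according to whether $|w-w'|\le1/R$ or $|w-w'|>1/R$. For the \emph{near} pairs I use the bound $R^{d}$ for the summand together with a lattice-point count: $w'\in\mathcal{D}_{N}$ has a partner $w\notin\mathcal{D}_{N}$ with $|w-w'|\le1/R$ only if $M-1/R\le|w'|\le M+c_{1}$, a spherical shell containing $\ll M^{d-1}\max(1/R,1)$ points of $\Lambda^{*}$, and each such $w'$ has $\ll\max(R^{-d},1)$ admissible partners; hence the near pairs contribute $\ll R^{d-1}M^{d-1}$ to $V(X_{N},R)$ (up to a bounded power of the bounded quantity $R$), which is $\ll R^{d-1}N^{1-1/d}$. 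For the \emph{far} pairs the second bound gives a contribution $\ll R^{d-1}\,\Sigma$ with
\begin{equation*}
\Sigma:=\sum_{w'\in\Lambda^{*}\cap\mathcal{D}_{N}}\ \ \sum_{\substack{w\in\Lambda^{*}\setminus\mathcal{D}_{N},\ |w|<2M\\ |w-w'|>1/R}}|w-w'|^{-d-1},
\end{equation*}
so, together with the remainder $\mathcal{O}(R^{d-1}N^{1-1/d})$ already present in \eqref{vardetermLast}, everything reduces to showing $\Sigma\ll M^{d-1}\bigl(1+\ln(RM)\bigr)$.

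Estimating $\Sigma$ is the crux, and the place where Lemma~\ref{lemma_sum} enters; the delicate point — and the main obstacle — is to keep the logarithm at $\ln(RM)$ rather than the trivial $\ln M$, which is precisely what makes the threshold bound come out as $\mathcal{O}(t^{d-1}\ln t)$ instead of divergent. I organise $\Sigma$ by the depth $M-|w'|$ of $w'$. When $|w'|\le M/2$ the triangle inequality gives $|w-w'|\ge M/2$ and \eqref{lemma_sumPoints1} yields $\ll M^{d-1}$. When $M/2<|w'|\le M+c_{1}$ the inclusion $B(\mathbf{0};M)\subseteq\mathcal{D}_{N}$ forces $|w-w'|\ge M-|w'|$, and the defining restriction of $\Sigma$ forces $|w-w'|>1/R$, so by \eqref{relat2} the inner sum over $w$ is $\ll\bigl(\max(M-|w'|,1/R)\bigr)^{-1}$; since each shell $\{k\le M-|w'|<k+1\}$ meets $\Lambda^{*}$ in $\ll M^{d-1}$ points,
\begin{equation*}
\Sigma\ \ll\ M^{d-1}+M^{d-1}\!\!\sum_{1\le k\le M/2}\!\bigl(\max(k,1/R)\bigr)^{-1}\ \ll\ M^{d-1}\bigl(1+\ln(RM)\bigr).
\end{equation*}
(Equivalently, one isolates the part $|w|\ge M+\lceil1/R\rceil$ and invokes \eqref{lemma_sumPoints2} with $a\asymp1/R$ to produce the logarithm $\ln(1+RM)$, handling the thin shell $M\le|w|<M+\lceil1/R\rceil$ by \eqref{relat2}; the range $|w|\ge2M$ has already been put into the $\mathcal{O}(R^{d-1}N^{1-1/d})$ term via \eqref{ReminderVar} and \eqref{lemma_sumPoints3}.) Collecting the near pairs, the far pairs and the remainder gives the master estimate. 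Substituting $R$ fixed gives $V(X_{N},R)\ll N^{1-1/d}\ln N=o(N)$ (hyperuniformity for large balls); substituting $R=R_{N}$ with $R_{N}\to0$ and $R_{N}N^{1/d}\to\infty$ gives $V(X_{N},R_{N})\big/\bigl(N\,\mathrm{Vol}(B(\cdot,R_{N}))\bigr)\ll\bigl(1+\ln(R_{N}N^{1/d})\bigr)/\bigl(R_{N}N^{1/d}\bigr)\to0$ (hyperuniformity for small balls); and substituting $R=tN^{-1/d}$ gives $V(X_{N},tN^{-1/d})\ll t^{d-1}(1+\ln t)$ uniformly in $N$, so $\limsup_{N\to\infty}V(X_{N},tN^{-1/d})=\mathcal{O}(t^{d-1}\ln t)$ as $t\to\infty$, which is \eqref{determinantalSmall}.
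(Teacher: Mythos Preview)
Your proof is correct and follows essentially the same approach as the paper: the near/far split according to $|w-w'|\lessgtr 1/R$, the lattice-point count for the near pairs, and the depth-layered estimate of the far sum via \eqref{relat2} (equivalently Lemma~\ref{lemma_sum} with $a\asymp 1/R$) are exactly what the paper does in its small-ball computation \eqref{vardetermSmall}--\eqref{vardetermSmallSecond}. Your presentation is a bit more streamlined --- you derive a single master bound $V(X_N,R)\ll R^{d-1}N^{1-1/d}\bigl(1+\ln(RN^{1/d})\bigr)$ valid whenever $RN^{1/d}\ge1$ and then specialise to all three regimes, whereas the paper handles the large-ball case separately (for fixed $R$ the near part is trivial and only \eqref{asympLarge} is needed, giving \eqref{vardeterm4} directly from Lemma~\ref{lemma_sum}) --- but the substance is identical.
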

\begin{proof}[Proof of Theorem  \ref{theorem_determin}]

{\bf (i) Large ball regime:}

By the estimate \eqref{asympLarge} one can easily verify  that for $R\in(0,\frac{1}{2}\mathrm{diam}\Omega_{\Lambda})$
\begin{align}\label{vardeterm33}
&R^{d}\mathop{\sum}\limits_{w'\in \Lambda^{*}, \atop  c_{0}N^{\frac{1}{d}}-1\leq |w'|< c_{0}N^{\frac{1}{d}}+c_{1}+1}
\mathop{\sum}\limits_{w\in \Lambda^{*},  w\notin\mathcal{D}_{N},  \atop |w|<2c_{0}N^{\frac{1}{d}}}
|w-w'|^{-d} \Big(J_{\frac{d}{2}}(2\pi|w-w'|R) \Big)^{2} \notag \\
\ll &
R^{d-1}\mathop{\sum}\limits_{w'\in \Lambda^{*}, \atop  c_{0}N^{\frac{1}{d}}-1\leq |w'|< c_{0}N^{\frac{1}{d}}+c_{1}+1}
\mathop{\sum}\limits_{w\in \Lambda^{*},  w\notin\mathcal{D}_{N},  \atop |w|<2c_{0} N^{\frac{1}{d}}}
|w-w'|^{-d-1}
\ll R^{d-1}N^{1-\frac{1}{d}}.
\end{align} 

Thus, \eqref{vardetermLast}, and \eqref{vardeterm33} and Lemma~\ref{lemma_sum} imply 
\begin{align}\label{vardeterm4}
V(X_{N},R)
&\ll
R^{d-1}\mathop{\sum}\limits_{w'\in \Lambda^{*}, \atop   |w'|< c_{0}N^{\frac{1}{d}}-1}
\mathop{\sum}\limits_{w\in \Lambda^{*},    \atop c_{0}N^{\frac{1}{d}}+c_{1}+1\leq|w|< 2c_{0}N^{\frac{1}{d}}}
|w-w'|^{-d-1} +
R^{d-1}N^{1-\frac{1}{d}} \notag \\
&\ll R^{d-1}N^{1-\frac{1}{d}}\ln N.
\end{align}

So, from  \eqref{vardeterm4} we have that
\begin{equation}
V(X_{N},R)=
\mathcal{O}(N^{1-\frac{1}{d}}\ln N)=o(N), \ \ \ \mathrm{as} \ N\rightarrow\infty,
\end{equation} 
which implies, that the determinantal point process  is hyperuniform for large balls.

{\bf (ii) Small ball regime:}
Let $\lim\limits_{N\rightarrow\infty}R_{N}=0$. Then by \eqref{vardetermLast}
\begin{align}
& \ \  V(X_{N},R)=
 R_{N}^{d}\sum\limits_{w'\in \Lambda^{*}\cap\mathcal{D}_{N}}
\mathop{\sum}\limits_{w\in \Lambda^{*},  w\notin\mathcal{D}_{N},  \atop 0<|w-w'|<\frac{1}{R_{N}}}
|w-w'|^{-d} \Big(J_{\frac{d}{2}}(2\pi|w-w'|R_{N}) \Big)^{2}\label{vardetermSmall} \\
&
+
\! R_{N}^{d}\!\!\sum\limits_{w'\in \Lambda^{*}\cap\mathcal{D}_{N}} \!\!\!\!\!\!
\mathop{\sum}\limits_{w\in \Lambda^{*},  w\notin\mathcal{D}_{N},  \atop \frac{1}{R_{N}} \leq|w-w'|<3c_{0} N^{\frac{1}{d}}} \!\!\!\!\!\!\!\!\!
|w-w'|^{-d} \Big(J_{\frac{d}{2}}(2\pi|w-w'|R_{N}) \Big)^{2} 
\!+\!\mathcal{O}\Big(R_{N}^{d-1}N^{1-\frac{1}{d}}\Big). \notag
\end{align} 

Using \eqref{asympSmall}, we get
\begin{align}
&
 R_{N}^{d}\sum\limits_{w'\in \Lambda^{*}\cap\mathcal{D}_{N}}
\mathop{\sum}\limits_{w\in \Lambda^{*},  w\notin\mathcal{D}_{N},  \atop 0<|w-w'|<\frac{1}{R_{N}}}
|w-w'|^{-d} \Big(J_{\frac{d}{2}}(2\pi|w-w'|R_{N}) \Big)^{2} \notag \\
 \ll & R_{N}^{2d}\!\!\sum\limits_{w'\in \Lambda^{*}\cap\mathcal{D}_{N}}
\mathop{\sum}\limits_{w\in \Lambda^{*},  w\notin\mathcal{D}_{N},  \atop 0<|w-w'|<\frac{1}{R_{N}}}\!\!\!\!\!
1
\ll R_{N}^{2d} \sum\limits_{m=[c_{0}N^{\frac{1}{d}}-\frac{1}{R_{N}}]}^{[c_{0}N^{\frac{1}{d}}]+1}\sum\limits_{m\leq |w'|<m+1}\sum\limits_{k=1}^{[\frac{1}{R_{N}}]+1}\!\!\sum\limits_{k\leq|w-w'|<k+1} \!\!\!\! \!\!1
 \label{vardetermSmallFirst} \\
 \ll & R_{N}^{2d} \sum\limits_{m=[c_{0}N^{\frac{1}{d}}-\frac{1}{R_{N}}]}^{[c_{0}N^{\frac{1}{d}}]+1}m^{d-1}\sum\limits_{k=1}^{[\frac{1}{R_{N}}]+1}k^{d-1}\ll
R_{N}^{d-1}N^{1-\frac{1}{d}}. \notag
\end{align} 

Applying \eqref{asympLarge}, Lemma~\ref{lemma_sum} and \eqref{relat2}, we have
\begin{align*}
&
 R_{N}^{d}\sum\limits_{w'\in \Lambda^{*}\cap\mathcal{D}_{N}}
\mathop{\sum}\limits_{w\in \Lambda^{*},  w\notin\mathcal{D}_{N},  \atop \frac{1}{R_{N}} \leq|w-w'|<3c_{0} N^{\frac{1}{d}}}
|w-w'|^{-d} \Big(J_{\frac{d}{2}}(2\pi|w-w'|R_{N}) \Big)^{2}
\notag \\
 \ll &
 R_{N}^{d-1}\sum\limits_{w'\in \Lambda^{*}\cap\mathcal{D}_{N}}
\mathop{\sum}\limits_{w\in \Lambda^{*},  w\notin\mathcal{D}_{N},  \atop \frac{1}{R_{N}} \leq|w-w'|<3c_{0} N^{\frac{1}{d}}}
|w-w'|^{-d-1}
 \notag \\
  \ll &
 R_{N}^{d-1}\mathop{\sum}\limits_{w'\in \Lambda^{*},    \atop \frac{1}{2}c_{o}N^{\frac{1}{d}}|w'|\leq c_{o}N^{\frac{1}{d}}+1} 
 \mathop{\sum}\limits_{w\in \Lambda^{*},  w\notin\mathcal{D}_{N},  \atop \frac{1}{R_{N}} \leq|w-w'|<3c_{0} N^{\frac{1}{d}}}
|w-w'|^{-d-1}+R_{N}^{d-1}  N^{1-\frac{1}{d}}
\end{align*}
\begin{align}
 & \ll 
 R_{N}^{d-1}\!\! \sum\limits_{k=1}^{[\frac{1}{2}c_{o}N^{\frac{1}{d}}]} \!\!\!\sum\limits_{[c_{o}N^{\frac{1}{d}}]-k\leq |w'|< [c_{o}N^{\frac{1}{d}}]-k+1}
  \mathop{\sum}\limits_{w\in \Lambda^{*},  w\notin\mathcal{D}_{N},  \atop |w-w'|\geq\frac{1}{R_{N}}+k}
|w-w'|^{-d-1}+R_{N}^{d-1}  N^{1-\frac{1}{d}}
\label{vardetermSmallSecond} \\
 & \ll 
 R_{N}^{d-1} N^{1-\frac{1}{d}}
 \sum\limits_{k=1}^{[\frac{1}{2}c_{o}N^{\frac{1}{d}}]} \Big(k+\frac{1}{R_{N}}\Big)^{-1}\ll R_{N}^{d-1}  N^{1-\frac{1}{d}}\ln (N^{\frac{1}{d}}R_{N}). \notag
\end{align} 

Combining \eqref{vardetermSmall}, \eqref{vardetermSmallFirst} and \eqref{vardetermSmallSecond}, we have
\begin{align}\label{VarianceHyperSmall}
&V(X_{N},R_N)
=\mathcal{O}\Big( R_{N}^{d-1}  N^{1-\frac{1}{d}}\ln (N^{\frac{1}{d}}R_{N}) \Big)
=\mathcal{O}\Big(  \frac{\ln (N^{\frac{1}{d}}R_{N})}{N^{\frac{1}{d}}R_{N}}  N\mathrm{Vol}(B(\cdot,R_{N})) \Big)
\notag \\
&=o\Big(N\mathrm{Vol}(B(\cdot,R_{N})) \Big) ,
\end{align} 
where we have used that $N^{\frac{1}{d}}R_{N}\rightarrow\infty$ as $N\rightarrow\infty$. This proves hyperuniformity for small balls.

{\bf (iii) Threshold regime}
Let $R_{N}=tN^{-\frac{1}{d}}$, $R\in(0,\frac{1}{2}\mathrm{diam}\Omega_{\Lambda})$ and $t>0$.
From \eqref{vardetermSmall}--\eqref{vardetermSmallSecond}, we have
\begin{equation}\label{form33}
 V(X_{N},R)
 \ll (tN^{-\frac{1}{d}})^{d-1}N^{1-\frac{1}{d}}\ln t=t^{d-1}\ln t \ \ \mathrm{as}   \ \ N\rightarrow\infty.
\end{equation}

Since $t>0$ was arbitrary, then
\begin{equation}
 \limsup\limits_{N\rightarrow\infty}V(X_{N},tN^{-\frac{1}{d}})
=\mathcal{O}(t^{d-1}\ln t)  \ \ \mathrm{as} \ \ t\rightarrow\infty.
\end{equation}
Theorem~\ref{theorem_determin} is proved.

\end{proof}


\section*{Acknowledgments} I would like to express my  huge gratitude to Peter Grabner, who posed this problem, gave many valuable advices  and ideas and  also for  his support and fruitful discussions. Also I would like to thank to Dmitriy Bilyk for his valuable comments and help in proof of Lemma~\ref{lemma_sum}.







\end{document}